\documentclass[12pt]{amsart}
\usepackage[english]{babel}
\usepackage{graphicx}
\usepackage{amsmath,amssymb,hyperref,srcltx}
\selectlanguage{english}

\newcommand{\diff}[2]{\mbox{{\rm Diff}{${\,}_{#1}({\mathbb C}^{#2},0)$}}}
\newcommand{\diffh}[2]{\mbox{$\widehat{\rm Diff}{{\,}_{#1}({\mathbb C}^{#2},0)}$}}

\newcommand{\cn}[1]{\mbox{\rm{(}${\mathbb C}^{#1},0$\rm{)}}}

\newtheorem{pro}{Proposition}[section]
\newtheorem{teo}{Theorem}[section]
\newtheorem*{main}{Main Theorem}
\newtheorem{cor}{Corollary}[section]

\newtheorem{lem}{Lemma}[section]

\theoremstyle{remark}
\newtheorem{rem}{Remark}[section]

\theoremstyle{definition}
\newtheorem{defi}{Definition}[section]

\begin{document}

\title[Local intersection dynamics and multiplicities]
{Boundness of intersection numbers for actions by two-dimensional biholomorphisms}




\author{Javier Rib\'{o}n}
\address{Instituto de Matem\'{a}tica e Estat\'\i stica \\
Universidade Federal Fluminense\\
Campus do Gragoat\'a\\
Rua Marcos Valdemar de Freitas Reis s/n, 24210\,-\,201 Niter\'{o}i, Rio de Janeiro - Brasil }
\thanks{e-mail address: jribon@id.uff.br}
\thanks{MSC class. Primary: 32H50, 37C85; Secondary: 37F75,  20G20, 20F16}
\thanks{Keywords: local diffeomorphism, intersection number, solvable group, nilpotent group}
\date{\today}
\maketitle

\bibliographystyle{plain}
\section*{Abstract}
We say that a group $G$ of local (maybe formal) biholomorphisms satisfies the uniform intersection
property if the intersection multiplicity $(\phi (V), W)$ takes only finitely many values as a function of
$G$ for any choice of analytic sets $V$ and $W$. In dimension $2$ we show that $G$ satisfies the
uniform intersection property if and only if it is finitely determined, i.e. there exists a natural number
$k$ such that different elements of $G$ have different Taylor expansions of degree $k$ at the origin.
We also prove that $G$ is finitely determined if and only if the action of $G$ on the space of germs of
analytic curves have discrete orbits.
\section{Introduction}
Consider a subgroup $G$ of the group $\diff{}{n}$ of
local biholomorphisms  defined in a neighborhood of the origin of
${\mathbb C}^{n}$. We are interested in understanding the properties of the action of $G$ on the
intersection multiplicities $(V,W)$ of analytic sets $V$ and $W$.
Given local complex analytic sets defined in a neighborhood of $0$ in ${\mathbb C}^{n}$
we study the behavior of the function $\phi \mapsto (\phi (V) ,W)$ defined in $G$.
This can be considered as a proxy for the study of the dynamics of the group and its mixing properties.
More precisely, we want to identify the groups such that the previous function only takes finitely many
values for any choice of $V$ and $W$. We say that such groups satisfy the Uniform Intersection (UI)
property.

A motivation to study the (UI) property is provided by the following result by Shub and Sullivan:
\begin{teo}[\cite{shub-sullivan}]
Let $f: U \to {\mathbb R}^{m}$ be a $C^{1}$ map where $U$ is an open subset of ${\mathbb R}^{m}$.
Suppose that $0$ is an isolated fixed point of $f^n$ for every $n \in {\mathbb N}$.
Then the sequence $(I(f^n,0))_{n \geq 1}$ of fixed point indexes is bounded by above by a constant that does not
depend on $n$.
\end{teo}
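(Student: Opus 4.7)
The plan is to control $I(f^n, 0)$ through the spectrum of $A := Df(0)$ and an inductive reduction to the unipotent case. Geometrically $I(f^n, 0)$ is the degree of $x \mapsto f^n(x) - x$ on a small sphere, so the obstruction to a uniform bound lies entirely in the root-of-unity part of the spectrum: if no eigenvalue of $A$ is a root of unity then $A^n - I$ is invertible for every $n$, hence $f^n - \mathrm{id}$ is a local $C^1$-diffeomorphism at $0$ by the inverse function theorem, and $I(f^n, 0) = \mathrm{sign}\,\det(A^n - I) \in \{-1, 1\}$.

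First I would reduce to the case where $1$ is the only root-of-unity eigenvalue of $A$. Let $k$ be the least common multiple of the orders of those eigenvalues; then the spectrum of $A^k$ meets the roots of unity only at $1$. Writing $f^{kn+j} = (f^k)^n \circ f^j$ for $0 \le j < k$, bounding $I(f^n, 0)$ uniformly in $n$ reduces to bounding $I((f^k)^n, 0)$ uniformly in $n$, plus the finitely many indices $I(f^j, 0)$. So I replace $f$ by $f^k$ and henceforth assume the only root-of-unity eigenvalue of $A$ is $1$.

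Second, using the $A$-invariant splitting $\mathbb{R}^m = E^1 \oplus E^{\neq 1}$ into the generalized $1$-eigenspace and its $A$-invariant complement, I would invoke a $C^1$ invariant manifold theorem to produce a $C^1$ $f$-invariant center submanifold $W^c$ tangent to $E^1$ at $0$ together with a complementary $f$-invariant foliation. In suitable $C^1$ coordinates $f$ acquires a block-triangular form whose $E^{\neq 1}$-block satisfies the hypothesis of the first paragraph; by multiplicativity of the fixed-point index along an invariant fibration, $I(f^n, 0)$ equals $\pm 1$ times the index of $f^n|_{W^c}$ at $0$. The problem thus reduces to bounding the indices of iterates of a $C^1$ germ on $(\mathbb{R}^d, 0)$ whose linear part is unipotent with all eigenvalues equal to $1$.

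The central obstacle is this unipotent case, which I would treat by induction on $d$. In dimension one the linear part is $1$ and a direct $C^1$ expansion shows $I(f^n, 0) \in \{-1, 0, 1\}$ for every $n$. In higher dimensions the unipotent structure of $A$ yields an $A$-invariant flag, and the invariant manifold theorem produces a corresponding $f$-invariant submanifold of codimension one; applying the inductive hypothesis on that submanifold, the one-dimensional case on the complementary factor, and the fibered multiplicativity of the index yields the uniform bound. The hardest step, I expect, is that the $C^1$ hypothesis is too weak for analytic normal forms, so one must run the whole invariant-manifold and quotient construction in low regularity while verifying that the isolated-fixed-point hypothesis for every iterate of $f$ descends to the quotient map --- this low-regularity dynamical bookkeeping is the delicate core of Shub and Sullivan's original argument.
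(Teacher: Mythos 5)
This theorem is quoted in the paper from \cite{shub-sullivan} purely as motivation; the paper contains no proof of it, so your proposal can only be measured against Shub and Sullivan's original argument. Your organizing idea --- that the obstruction to a uniform bound lives entirely in the root-of-unity part of the spectrum of $A=Df(0)$, and that the non-root-of-unity part contributes only a sign via $\mathrm{sign}\det(A^n-I)$ --- is correct and does match the actual proof. But two steps fail. First, replacing $f$ by $f^k$ only controls $I(f^{kn},0)$: an iterate $f^{kn+j}$ with $0<j<k$ is $(f^k)^n\circ f^j$, and the fixed-point index of a composition is not determined by the indices of its factors, so ``plus the finitely many indices $I(f^j,0)$'' does not recover the missing residue classes. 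The correct reduction is a comparison lemma: if every eigenvalue $\lambda$ of $A$ with $\lambda^n=1$ already satisfies $\lambda=1$, then $I(f^n,0)=\pm I(f,0)$; applying this to $f^{d}$ for a suitable divisor $d$ of the lcm of the orders handles every $n$, since $I(f^n,0)$ then takes at most as many values as there are such divisors.

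Second, and more seriously, your treatment of the unipotent case --- which you rightly identify as the core --- rests on an invariant manifold theorem that does not exist. Graph-transform arguments require a spectral gap; when all eigenvalues of $A$ equal $1$ there is none, and there is no theorem producing, for a merely $C^1$ germ, an $f$-invariant hypersurface tangent to a prescribed hyperplane of the unipotent flag. The same objection hits your second reduction: a $C^1$ germ is not a skew product over its center manifold, the ``complementary invariant foliation'' is not available in this regularity, and without an actual fibration there is no multiplicativity of the index to invoke. Shub and Sullivan avoid all of this with a homotopy, not an induction: writing $f^n-\mathrm{id}=\sum_{i=0}^{n-1}(f-\mathrm{id})\circ f^{i}$, they deform $f^n-\mathrm{id}$ on a small sphere to $(I+A+\cdots+A^{n-1})\circ(f-\mathrm{id})$, the $C^1$ hypothesis guaranteeing that the error is $o(|x|)$ and hence dominated; under the hypothesis of the comparison lemma the operator $I+A+\cdots+A^{n-1}$ is invertible (its eigenvalues are $n$ on the generalized $1$-eigenspace and $(\lambda^n-1)/(\lambda-1)\neq 0$ elsewhere), so the degree equals $\pm I(f,0)$. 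That homotopy identity is the missing idea, and it replaces the entire invariant-manifold apparatus you propose.
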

 Consider the map $F : U \times {\mathbb R}^{m} \to {\mathbb R}^{m} \times {\mathbb R}^{m}$ defined by the formula
$F(x,y)=(f(x),y)$. The fixed point index of $f^{n}$ at the origin can be interpreted as the topological
intersection multiplicity $(\Delta, F^{n} (\Delta))$ at $0 \in {\mathbb R}^{m} \times {\mathbb R}^{m}$
where $\Delta = \{ (x,x) : x \in {\mathbb R}^{m} \}$ is the diagonal.
Shub and Sullivan's theorem can be used to show that a a $C^{1}$ automorphism $f: M \to M$ of a compact manifold $M$,
whose sequence of Lefschetz numbers $(L(f^n))_{n \geq 1}$ is unbounded, has infinitely many periodic points.

\strut

The previous theorem inspired Arnold \cite{Arnold:intersection} to study the uniform intersection
property for $C^{\infty}$ and holomorphic maps and specially for diffeomorphisms.
Arnold's results were generalized by Seigal-Yakovenko \cite{Seigal-Yakovenko:ldi} and Binyamini \cite{Binyamini:finite}.
The degree of regularity is much higher than $C^{1}$ of course, but on the other hand the results can be applied to more general classes of maps and groups.

\strut

This paper completes in dimension $2$ the program of characterizing the groups that satisfy the uniform intersection
property, that was partially carried out in
\cite{Arnold:intersection, Seigal-Yakovenko:ldi, Binyamini:finite, JR:finite}
(see Main Theorem).

\strut

The paper is  inserted in a classical subject, namely
the local study of asymptotics of topological complexity of
intersections.
The global case has been treated extensively in the literature
\cite{Artin-Mazur} \cite{Arnold:complexity} \cite{Arnold:dynamics} \cite{Arnold:problems}
and references therein. Moreover, there are more than 15
problems in the book of Arnold's problems devoted to asymptotics of topological complexity
of intersections in both the local and global settings \cite{Arnold:problems}.

\strut

Let us recap some results   in the literature.
In the context of subgroups of $\diff{}{n}$ (or its formal completion $\diffh{}{n}$), with $n \geq 2$,
we have the following inclusions:

\begin{equation}
\label{equ:inc}
 \mathrm{cyclic} \subset \mathrm{finitely \ generated \ abelian} \subset \mathrm{Lie} =
\mathrm{F_{\dim}} \subset \mathrm{UI} \subset \mathrm{FD} .
\end{equation}
Cyclic groups satisfy (UI) by a Theorem of Arnold \cite{Arnold:intersection}.
The result was generalized to finitely generated abelian groups by Seigal and Yakovenko
\cite{Seigal-Yakovenko:ldi}. Binyamini showed (UI) for subgroups of $\diffh{}{n}$ that can be embedded
in a subgroup of $\diffh{}{n}$ with a natural structure of finitely dimensional Lie group with finitely
many connected components \cite{Binyamini:finite}. Moreover he showed that every finitely generated
abelian group admits such an embedding. The term ${\mathrm F}_{\dim}$ stands for the set of subgroups of
$\diffh{}{n}$ whose Zariski closure has finite dimension;
they satisfy (UI) by a theorem of the author \cite{JR:finite}.
These groups are exactly the groups
that can be embedded in Lie groups (with finitely many connected components)
but the use of intrinsic techniques allowed us to show for instance that
virtually polycyclic subgroups of $\diffh{}{n}$ (and in particular finitely generated nilpotent subgroups) hold
property (UI). Very roughly speaking, in all previous cases the problem is transferred to a finite dimensional
space where some finiteness result (the Skolem theorem on zeroes of quasipolynomials \cite{skolem:seq},
noetherianity arguments) is used.

The term (FD) in sequence (\ref{equ:inc}) stands for finite determination; we say that $G$ is finitely
determined if there exists $k \in {\mathbb N}$ such that every element of $G$ is determined by
the $k$ jet of its Taylor power series expansion at the origin.
It is easy to verify that (UI) groups are always finitely determined for $n \geq 2$
(Lemma \ref{lem:easy}).

Let us focus on the subsequence $\mathrm{F_{\dim}} \subset \mathrm{UI} \subset \mathrm{FD}$
of inclusions.
There are examples of groups that satisfy (UI) but are not finite dimensional.
Consider the abelian subgroup
\[ {\mathcal G} := \{ (x, y+ f(x)) : f \in {\mathbb C} \{ x \}, \ f(0) = 0 \} \]
of $\diff{}{2}$.
Consider a finitely determined subgroup $G$  of ${\mathcal G}$
that is non-finite dimensional. For instance we can consider the group generated by
the diffeomorphisms $\phi_{j}(x,y) = (x, y + d_{j} x + x^{j+1})$ for $j \in {\mathbb N}$.
If the set $\{ d_1, d_2, \cdots \}$ is linearly independent over ${\mathbb Q}$ then
the first jet of an element $\phi$ of $G$ determines $\phi$.
Moreover $G$ is infinitely dimensional since the polynomials of the form
$d_{j} x + x^{j+1}$ for $j \in {\mathbb N}$ are linearly independent over ${\mathbb C}$.
Anyway $G$ satisfies (UI). A complete proof is provided in Proposition \ref{pro:UIn} but
in order to illustrate how finite determination
implies (UI) in this context, let us consider the intersection multiplicity
\begin{equation}
\label{equ:sexa}
(\phi  \{ y =g(x) \},  \{ y =g(x) \})
\end{equation}
for a smooth curve  $y = g(x)$.
Given $\phi (x,y) = (x, y+ f(x))$ we have
\[ \phi \{ y =g(x) \} = \{ y = f(x) + g(x) \}. \]
Thus Expression (\ref{equ:sexa}) is equal to
the vanishing order of $f(x)$ at $x=0$.
Since it  is bounded  by hypothesis,
so is the intersection multiplicity (\ref{equ:sexa})
whenever it is different than $\infty$.
Analogously we have ${\mathrm F}_{\dim} \subsetneq \mathrm{UI}$ (see sequence (\ref{equ:inc}))
in every dimension $n \geq 2$.

What happens with the inclusion $\mathrm{UI} \subset \mathrm{FD}$? It is not difficult to find examples
of subgroups of $\diff{}{n}$ that satisfy (FD) but not (UI) in dimension $n \geq 3$ (Lemma \ref{lem:exa}).
The main result of this paper is the following theorem:
\begin{main}
\label{teo:main}
Let $G$ be a subgroup of $\diffh{}{2}$. Then $G$ satisfies the uniform intersection property
if and only if $G$ satisfies the finite determination property.
\end{main}

This theorem characterizes the groups of diffeomorphisms satisfying the uniform intersection
property in the two dimensional case. It unifies and extends the previous results in the literature.
Such results showed that the following types of groups hold the uniform intersection property:
cyclic groups \cite{Arnold:intersection}, finitely generated abelian groups \cite{Seigal-Yakovenko:ldi},
Lie groups (with finitely many connected components) \cite{Binyamini:finite}, finitely generated
virtually nilpotent groups and virtually polycyclic groups \cite{JR:finite}.
The finite determination condition is the culmination of the journey.
It is much simpler than the previous best sufficient conditions to guarantee the uniform
intersection property, namely $\mathrm{Lie}$ and $\mathrm{F_{\dim}}$ (Equation (\ref{equ:inc})).
In order to show the (UI) property, finite determination is the simplest necessary condition.
Surprisingly, it is also a sufficient condition.

\strut

We can characterize finitely determined subgroups of $\diffh{}{2}$ in terms of their actions on
the space of curves.
\begin{teo}
\label{teo:fd_discrete}
Let $G$ be a subgroup of $\diffh{}{2}$.  Then $G$ is finitely determined if and only if
the action of $G$ on the topological space of formal irreducible curves has discrete orbits.
\end{teo}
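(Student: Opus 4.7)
The plan is to handle the two implications separately and let the forward direction ride on the Main Theorem. If $G$ is finitely determined then it satisfies (UI) by the Main Theorem; applying (UI) with $V=W=C$ for an arbitrary formal irreducible curve $C$, the set $\{(\phi(C),C):\phi\in G\}$ is finite, consisting of the value $\infty$ (attained exactly when $\phi(C)=C$) together with finitely many integers, hence bounded by some $N$ on its finite part. In the natural topology on formal irreducible curves, convergence $C_n\to C$ with $C_n\neq C$ is equivalent to $(C_n,C)\to\infty$, so no sequence of distinct curves in $G\cdot C$ accumulates at $C$ and $C$ is isolated in its orbit. Since every $\phi\in G$ acts as a homeomorphism of the curve space (intersection multiplicities are diffeomorphism invariant), each orbit point is isolated and $G\cdot C$ is discrete.

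For the reverse direction I would argue the contrapositive. Suppose $G$ is not finitely determined; then for each $k\in\mathbb{N}$ one can pick $\sigma_k\in G\setminus\{\mathrm{Id}\}$ with $j^k\sigma_k=j^k\mathrm{Id}$, so $\sigma_k\to\mathrm{Id}$ in the Krull topology of $\widehat{\mathrm{Diff}}(\mathbb{C}^2,0)$. Continuity of the action yields $\sigma_k(C)\to C$ in the curve space for every formal irreducible $C$. It therefore suffices to exhibit a single curve $C$ with $\sigma_k(C)\neq C$ for infinitely many $k$, because such a $C$ becomes a nontrivial limit point of its own orbit $G\cdot C$, contradicting discreteness.

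The core technical content is the following key lemma: for every non-identity $\sigma\in\widehat{\mathrm{Diff}}(\mathbb{C}^2,0)$, the set $\mathrm{Fix}(\sigma)=\{C:\sigma(C)=C\}$ of formal irreducible invariant curves is closed and nowhere dense. Granting this, $\bigcup_k\mathrm{Fix}(\sigma_k)$ is meager in the curve space, which is Baire (being complete under the intersection-number pseudometric), and we can extract a $C$ outside all but finitely many $\mathrm{Fix}(\sigma_k)$. The main obstacle is proving nowhere-density: closure is immediate from continuity, but ruling out a Krull-open family of invariant curves for a nontrivial germ requires genuine content. The approach I would take is to parametrize curves by their Puiseux coefficients (which carry infinitely many free parameters) and observe that the system $\sigma(C)=C$ is pro-algebraic in those coefficients; an open family of solutions would contradict the finite-dimensional character of the structural data (tangent direction, Camacho--Sad-type indices) controlling formal invariant curves of a non-identity germ. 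A more hands-on back-up is to build $C$ explicitly by diagonal induction over its Puiseux coefficients, choosing at each step so as to avoid the codimension-$\geq 1$ condition $\sigma_k(C)=C$ for the next value of $k$.
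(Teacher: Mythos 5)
Your forward implication is essentially the paper's: finite determination gives (UI) by the Main Theorem, and (UI) is converted into discreteness of curve orbits exactly as in Lemma \ref{lem:redc} and Proposition \ref{pro:ui_discrete}. That half is fine.

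The reverse implication has a genuine gap. Everything rests on your ``key lemma'' that $\mathrm{Fix}(\sigma)$ is nowhere dense for every $\sigma \neq \mathrm{Id}$, and you do not prove it: the two sketches you offer (pro-algebraicity in Puiseux coefficients plus ``finite-dimensional structural data'', or a diagonal choice of coefficients avoiding a ``codimension $\geq 1$'' condition) both presuppose the very fact to be established, namely that the invariance condition $\sigma(C)=C$ cannot hold on an entire basic open set ${\mathcal U}_{\mathfrak s}$. The Baire framework is also unjustified as stated: the space of formal irreducible curves is \emph{not} complete for the intersection-number ultrametric, since a Cauchy sequence of curves (equivalently, a full infinite sequence of infinitely near points) may converge to an infinitely singular valuation that is not a formal curve; so ``Baire, being complete'' fails and you would need a separate argument for Baire-ness or a completion in which $\mathrm{Fix}(\sigma_k)$ is no longer obviously nowhere dense. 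The paper avoids all of this with a concrete countable construction, namely the argument inside the proof of Lemma \ref{lem:easy}: if every curve $\gamma_c = \{ f - c g = 0\}$ of a pencil were eventually fixed by the $\sigma_k$, then the sets $A_l = \{ c : \sigma_k (\gamma_c) = \gamma_c \ \forall k \geq l \}$ cover ${\mathbb C}$, some $A_{l_0}$ is uncountable, hence $(f \circ \sigma_k) g - f (g \circ \sigma_k)$ vanishes on uncountably many curves and is therefore identically zero, i.e. $\frac{f}{g} \circ \sigma_k = \frac{f}{g}$; applying this to the three pencils $y/x$, $y^2/x$ and $y/x^2$ forces $\sigma_k = \mathrm{Id}$ for large $k$, a contradiction. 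Consequently some $\gamma_c$ satisfies $\sigma_k(\gamma_c) \neq \gamma_c$ for infinitely many $k$, and since $\sigma_k(\gamma_c) \to \gamma_c$ its orbit is not discrete. To repair your proof, replace the Baire step by this pencil/uncountability argument (or prove your key lemma by lifting the same pencil argument to the last infinitely near point of ${\mathfrak s}$ after blowing up).
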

We say that a formal irreducible curve $\gamma$ (cf. Definition \ref{def:curve})
satisfies the property $\mathrm{(UI)}_{\gamma}$
if its orbit is discrete (see Definition \ref{def:topology} for a description of the topology on the space of
curves).  Anyway, let us remark that  the $G$-orbit of $\gamma$ is discrete if and only if
the function $\phi \mapsto (\phi (\gamma), \gamma)$ (of $G$) takes finitely many values
(Lemma \ref{lem:redc}).

 \strut

The Main Theorem holds true in the $C^{\infty}$ setting where we consider subgroups of the group
$\mathrm{Diff}^{\infty} ({\mathbb R}^{2},0)$ of $C^{\infty}$ local diffeomorphisms (defined in  a neighborhood of the origin of ${\mathbb R}^{2}$) and germs of $C^{\infty}$ subvarieties in the definition of the uniform intersection property.
The reason is that we fall in the setting of the Main Theorem  by considering the Taylor series expansion at the origin of diffeomorphisms and subvarieties.


The implication (FD) $\implies$ (UI) is the
difficult part of the proof of the Main Theorem.
As a first reduction we show that
the group $G$ satisfies (UI) if and only if $\mathrm{(UI)}_{\gamma}$ holds for any
irreducible curve $\gamma$ (Proposition  \ref{pro:ui_discrete} ).

Given a pair $(G,\gamma)$ consisting of a finitely determined subgroup of $\diffh{}{2}$
and a formal irreducible curve $\gamma$,
we say that $(J, \gamma)$ is a reduced pair if $J$ is a subgroup of $G$ and
$G$ satisfies $\mathrm{(UI)}_{\gamma}$ if and only
if $J$ does. Fix an irreducible curve $\gamma$. It is possible to show that
there exists a reduced pair $(J,\gamma)$ such that one of the next situations happens:
\begin{itemize}
\item $\gamma$ is $J$-invariant or
\item $J$ is finite dimensional or
\item $J$ is abelian.
\end{itemize}
The first case is trivial since the function of $J$ defined by
$\phi \mapsto (\phi (\gamma), \gamma)$ is constant and equal to
$\infty$. In the second case $J$ satisfies (UI) as
a consequence of the inclusion $F_{\dim} \subset \mathrm{UI}$ and more
precisely of Theorem 1.5 of \cite{JR:finite}.
It remains to consider the case where $J$ is abelian and infinitely dimensional.
Then $J$ is a subgroup of the exponential
$\mathrm{exp} ({\mathfrak g})$ of an infinitely dimensional abelian complex Lie algebra
${\mathfrak g}$ of formal vector fields.
The property $\dim_{\mathbb C} {\mathfrak g} = \infty$ forces ${\mathfrak g}$ to be very special.
Indeed there exists a formal vector field $X$ such that
every element of ${\mathfrak g}$ is of the form $f X$ where $f \in {\mathbb C}[[x,y]]$ and $X(f)=0$
(Proposition \ref{pro:nla}).
This situation was described above for $X  = \frac{\partial }{\partial y}$.
The proof of property (UI) can be reduced to the proof of such a case.

We consider several kind of reductions $(J, \gamma)$ of a pair $(G,\gamma)$.
One reduction consists in replacing $G$ with a finite index or suitable normal subgroup $J$.
We can also reduce the pair by replacing
$G$ with the subgroup $J$ of elements of $G$ whose linear parts preserve the tangent direction
$\ell$ of $\gamma$ at the origin.
Furthermore this reduction can be generalized by considering iterated tangents
(or infinitely near points) of the curve $\gamma$.
It is then natural to use desingularization techniques (of curves and foliations)
in the proof of the Main Theorem
to obtain simpler expressions for the pair $(G,\gamma)$.

\section{Notations}
Let us introduce some notations that will be useful in the paper.
\subsection{Formal power series and curves}
\begin{defi}
We denote by ${\mathcal O}_{n}$ (resp. $\hat{\mathcal O}_{n}$)
the local ring ${\mathbb C}\{ z_1, \cdots, z_n \}$  (resp. ${\mathbb C}[[ z_1, \cdots, z_n ]]$)
of convergent (resp. formal) power series with complex coefficients in $n$ variables.
We denote by ${\mathfrak m}$ the maximal ideal of  $\hat{\mathcal O}_{n}$.
We define $\hat{K}_n$ as the field of fractions of $\hat{\mathcal O}_{n}$.
\end{defi}
\begin{defi}
Let $f \in \hat{\mathcal O}_n$. We define the $k$-jet $j^{k} f$ as the polynomial
$P_{k} (z_1, \cdots, z_n)$ of degree less or equal than $k$ such that $f - P_k \in {\mathfrak m}^{k+1}$.
\end{defi}
Let us define formal curves for an ambient space of dimension $2$.
\begin{defi}
\label{def:curve}
A formal curve is an ideal $(f)$ of $\hat{\mathcal O}_{2}$ where $f$ belongs to the maximal ideal of
$\hat{\mathcal O}_{2}$. We say that the curve is irreducible if $f$ is an irreducible element of
$\hat{\mathcal O}_{2}$.
\end{defi}
\subsection{Formal diffeomorphisms and vector fields}
\begin{defi}
We define the group of formal diffeomorphisms in $n$ variables as
\[ \diffh{}{n} = \{ (\phi_1, \cdots, \phi_n) \in {\mathfrak m}^{n} :
(j^{1} \phi_1, \cdots, j^{1} \phi_n) \in \mathrm{GL}(n, {\mathbb C}) \}. \]
The group operation is defined in such a way that the composition
\[ (\rho_1, \cdots, \rho_n) = (\phi_1, \cdots, \phi_n) \circ (\eta_1, \cdots, \eta_n) \]
satisfies $j^{k} \rho_j = j^{k} (j^{k} \phi_{j} \circ (j^{k} \eta_1, \cdots, j^{k} \eta_n))$
for all $1 \leq j \leq n$ and $k \in {\mathbb N}$.
The subgroup $\diff{}{n} = \diffh{}{n} \cap {\mathcal O}_{n}^{n}$ is called
{\it group of local biholomorphisms} in $n$ variables.
\end{defi}
\begin{rem}
The group $\diff{}{n}$ consists of germs of biholomorphisms by the inverse function theorem.
\end{rem}
\begin{defi}
Given $\phi = (\phi_1, \cdots, \phi_n) \in \diffh{}{n}$ we denote by
$j^{1} \phi$ the linear part at the origin $(j^{1} \phi_1, \cdots, j^{1} \phi_n)$ of $\phi$.
Given a subgroup $G$ of $\diffh{}{n}$,
we define $j^{1} G$ as the group $\{ j^{1} \phi : \phi \in G \}$ of linear parts of elements of $G$.
\end{defi}
\begin{defi}
We denote by $\diffh{u}{n}$ the set of formal diffeomorphisms consisting of elements
$(\phi_1, \cdots, \phi_n)$ such that $j^{1} \phi$ is a a unipotent linear isomorphism,
i.e. $1$ is its unique eigenvalue.
We define the group of {\it tangent to the identity} formal diffeomorphisms as
\[ \diffh{1}{n} = \{ \phi = (\phi_1, \cdots, \phi_n) \in \diffh{}{n} : j^{1} \phi = \mathrm{Id} \},  \]
i.e. is the subgroup of $\diffh{}{n}$ of elements with identity linear part.
\end{defi}
\begin{defi}
We define the Lie algebra of formal vector fields in $n$ variables as
\[ \hat{\mathfrak X} \cn{n} =
\left\{ \sum_{j=1}^{n}  f_{j} (z_1, \cdots, z_n) \frac{\partial}{\partial z_j}  :
 f_1, \cdots, f_n \in {\mathfrak m} \right\}. \]
It can be interpreted as a derivation of the ${\mathbb C}$-algebra ${\mathfrak m}$.
If an element of $\hat{\mathfrak X} \cn{n}$ satisfies $f_1, \cdots, f_n \in {\mathcal O}_n$ we say that it
is a (singular) holomorphic local vector field.
We denote by ${\mathfrak X} \cn{n}$ the set of holomorphic singular local vector fields.
\end{defi}
\begin{defi}
Given $X = \sum_{j=1}^{n}  f_{j} (z_1, \cdots, z_n) \frac{\partial}{\partial z_j}  \in  \hat{\mathfrak X} \cn{n}$,
we say that $X$ is {\it nilpotent} if
$j^{1} X :=  \sum_{j=1}^{n}  j^{1} f_{j} (z_1, \cdots, z_n) \frac{\partial}{\partial z_j}$
is a linear nilpotent vector field. Denote by $\hat{\mathfrak X}_{N} \cn{n}$ the set of formal nilpotent vector
fields.
\end{defi}
\subsection{The central and the derived series}
\begin{defi}
\label{def:solv}
Let $H,L$ be subgroups of $G$. We define
$[H,L]$ as the group generated by the commutators $[h,l]:= h l h^{-1} l^{-1}$ for $h \in H$, $l \in L$.
We define the derived series
\[ G^{(0)} = G, \ G^{(1)} = [G^{(0)}, G^{(0)}],  \cdots, \ G^{(k+1)} = [G^{(k)}, G^{(k)}],  \cdots \]
of the group $G$. The group $G^{(k)}$ is called the $k$th derived group of $G$.
The derived group $G^{(1)}$ is also denoted by $G'$.

We define by $\ell (G) = \min (\{ k \geq 0 : G^{(k)} = \{\mathrm{Id} \} \} \cup \{ \infty \})$ the {\it derived length} of $G$.
We say that $G$ is {\it solvable} if $\ell (G) < \infty$.
\end{defi}
\begin{defi}
\label{def:nilp}
 We define the lower central series
\[ {\mathcal C}^{0} G = G, \ {\mathcal C}^{1} G = [{\mathcal C}^{0} G, G],  \cdots,
\ {\mathcal C}^{k+1} G = [{\mathcal C}^{k} G, G],  \cdots \]
of the group $G$. We say that $G$ is {\it nilpotent if} there exists $k \geq 0$ such that
$ {\mathcal C}^{k} G =  \{\mathrm{Id} \}$.
Moreover $\min (\{ k \geq 0 : {\mathcal C}^{k} G = \{\mathrm{Id} \} \} \cup \{ \infty \})$
is called the {\it nilpotency class} of $G$.
\end{defi}
\begin{defi}
Given Lie subalgebras ${\mathfrak h}$, ${\mathfrak l}$ of a Lie algebra ${\mathfrak g}$.
We define $[ {\mathfrak h}, {\mathfrak l}]$ as the Lie algebra generated by the Lie brackets
$[X,Y]$ where $X \in {\mathfrak h}$ and $Y \in {\mathfrak l}$.
We can define the derived Lie algebra ${\mathfrak g}'$, the derived series
$({\mathfrak g}^{(k)})_{k \geq 0}$, the central lower series $({\mathcal C}^{k} {\mathfrak g})_{k \geq 0}$,
nilpotent and solvable Lie algebras analogously as in Definitions \ref{def:solv} and \ref{def:nilp}.
\end{defi}
\section{Finite determination and uniform intersection}
In this section we introduce the first connections between the finite determination property and
the uniform intersection property. In particular we show that the latter property implies the former
in dimension $n \geq 2$ but the reciprocal is not true for any $n \geq 3$.
In particular the analogue of the Main Theorem for $n \geq 3$ does not hold true.
\begin{defi}
Given $k \in {\mathbb N}$ we say that a subgroup $G$ of $\diffh{}{n}$ is $k$-{\it finitely determined}
if $\{ \phi \in G : j^{k} \phi = \mathrm{Id} \} = \{ \mathrm{Id} \}$. We say that $G$ has the {\it finite determination property} (FD) if
it is $k$-finitely determined for some $k \in {\mathbb N}$.
\end{defi}
\begin{rem}
The elements of a $k$-finitely determined subgroup of $\diffh{}{n}$ are determined by their $k$-jets.
\end{rem}
\begin{defi}
\label{def:intmul}
The intersection multiplicity of ideals $I$ and $J$ of $\hat{\mathcal O}_{n}$ is defined by
the dimension of the complex vector space $ \hat{\mathcal O}_{n} / (I,J) $.
\end{defi}
\begin{rem}
Formal schemes are given by ideals of the ring $\hat{\mathcal O}_{n}$.
The previous definition provides un upper bound for the
usual definition of intersection multiplicity of formal schemes (or ideals) with their associated cycle structure
(cf. \cite[Proposition 8.2]{Fulton:inter}).
Moreover both definitions coincide if one of them is equal to $\infty$.
Thus the uniform intersection property with respect to Definition \ref{def:intmul} implies the (UI)
for the usual definition of the intersection multiplicity.
\end{rem}
\begin{defi}
We say that a subgroup $G$ of $\diffh{}{n}$ has the {\it uniform intersection property} (UI) if
for any pair $\alpha$, $\beta$ of formal schemes, the subset
$\{ (\phi (\alpha), \beta) : \phi \in G \}$ of ${\mathbb N} \cup \{\infty\}$ is finite.
Equivalently given ideals $I, J$ of $\hat{\mathcal O}_{n}$ the set
$\{ (\phi^{*} (I), J) : \phi \in G \}$ is finite.
%
In other words the intersection multiplicities between the shifts of $\alpha$ by the action
of the group and $\beta$ (different from infinity) are uniformly bounded.
\end{defi}
\begin{lem}
\label{lem:easy}
Let $G$ be a subgroup of $\diffh{}{n}$ ($n \geq 2$) that satisfies (UI).
Then it also satisfies (FD).
\end{lem}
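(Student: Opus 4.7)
My plan is to prove the contrapositive: assume $G$ is not finitely determined and exhibit a pair of formal schemes witnessing failure of (UI). Under this assumption there is a sequence $\phi_k \in G \setminus \{\mathrm{Id}\}$ with $j^k \phi_k = \mathrm{Id}$, so $\phi_k - \mathrm{Id} \in {\mathfrak m}^{k+1}$. The strategy relies on the guiding principle that whenever $\phi_k - \mathrm{Id}$ fails to vanish on a chosen formal curve, it produces an intersection multiplicity of order at least $k+1$; by (UI) this must collide with the finite bound attached to the curve, forcing the restriction to vanish once $k$ is large.

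I describe the argument in dimension $n = 2$; the case $n \geq 3$ is formally identical after replacing hyperplanes by one-dimensional coordinate subvarieties. First I would apply (UI) to $V = W = \{z_1 = 0\}$: the multiplicity $(\phi_k(V), V) = \dim {\mathbb C}[[z_2]]/(\phi_{k,1}(0, z_2))$ is either infinite or at least $k+1$ because $\phi_{k,1}(0, z_2) \in (z_2^{k+1})$. The (UI) bound on finite values forces $\phi_{k,1}(0, z_2) = 0$ for large $k$, i.e., $\phi_{k,1} \in (z_1)$; symmetrically $\phi_{k,2} \in (z_2)$. Hence $\phi_k = (z_1(1 + \alpha_k), z_2(1 + \beta_k))$ with $\alpha_k, \beta_k \in {\mathfrak m}^k$ for all sufficiently large $k$.

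Next I apply (UI) to two parametric families. For the lines $V_c = W_c = \{z_1 - c z_2 = 0\}$ with $c \in {\mathbb C}^*$, a direct computation gives $(\phi_k(V_c), V_c) = 1 + \mathrm{ord}_{z_2}((\alpha_k - \beta_k)(c z_2, z_2))$ when finite, hence $\geq k+1$. If $M_c$ denotes the (UI) bound at $V_c$, then $k > M_c$ forces $(\alpha_k - \beta_k) \in (z_1 - c z_2)$. Running the same scheme for the parabolas $\{z_2 - c z_1^2 = 0\}$ yields $(\phi_k(V_c), V_c) = 2 + \mathrm{ord}_{z_1}(\alpha_k(z_1, c z_1^2))$ when finite, so $k$ above the corresponding bound forces $\alpha_k \in (z_2 - c z_1^2)$.

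The hard part is that the (UI) bound $M_c$ is not uniform in $c$. I would circumvent this by a cardinality argument: for each of the two families, ${\mathbb C}^*$ decomposes as $\bigcup_{M \in {\mathbb N}} \{c : M_c \leq M\}$, so some constant $M^*$ satisfies $M_c \leq M^*$ on an uncountable subset (a countable union of countable sets cannot exhaust ${\mathbb C}^*$). For $k > M^*$ the element $\alpha_k - \beta_k$ would be divisible by uncountably many pairwise non-associate primes $z_1 - c z_2$ in the UFD $\hat{\mathcal O}_2$, forcing $\alpha_k = \beta_k$; the parabola family then similarly forces $\alpha_k = 0$. Consequently $\phi_k = \mathrm{Id}$ for all large $k$, contradicting $\phi_k \neq \mathrm{Id}$.
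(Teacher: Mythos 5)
Your argument in dimension $2$ is correct, and it is essentially the paper's own proof in different clothing: both proceed by contrapositive, use the lower bound $(\phi_k(\gamma),\gamma)\geq k+1$ coming from $j^k\phi_k=\mathrm{Id}$ to force eventual invariance of every member of a pencil of curves, invoke the cardinality argument (${\mathbb C}$ is not a countable union of countable sets) to obtain one threshold valid on an uncountable set of parameters, and conclude because a nonzero element of the UFD $\hat{\mathcal O}_2$ has only finitely many pairwise non-associate irreducible factors. Your pencils $\{z_1-cz_2\}$ and $\{z_2-cz_1^2\}$ play exactly the role of the paper's rational functions $y/x$ and $y/x^2$ in the step where it deduces $\frac{f}{g}\circ\phi_j\equiv\frac{f}{g}$ from invariance of uncountably many members of the pencil $f-cg=0$. (One small point of order: your formula $(\phi_k(V_c),V_c)=2+\mathrm{ord}_{z_1}(\alpha_k(z_1,cz_1^2))$ for the parabolas already uses $\alpha_k=\beta_k$; without it the relevant quantity is $\beta_k-2\alpha_k-\alpha_k^2$, which still lies in ${\mathfrak m}^k$, so the argument goes through in either order.)

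The genuine gap is the claim that the case $n\geq 3$ is ``formally identical.'' It is not, and the sketch as given fails in two places. First, the self-intersection of a hyperplane is useless for $n\geq 3$: $(\phi_k(V),V)$ for $V=\{z_1=0\}$ is computed in $\hat{\mathcal O}_n/(z_1,\phi_{k,1})$, a ring of Krull dimension at least $n-2\geq 1$, hence always of infinite vector-space dimension, so no information is extracted. Second, if you instead use one-dimensional coordinate subvarieties as you propose, their ideals are not principal, so the ``divisible by uncountably many non-associate primes'' mechanism does not apply verbatim; what you can extract from axes and from pencils of lines inside coordinate $2$-planes only pins down $\phi_k$ on the union of the coordinate $2$-planes, which does not determine $\phi_k$: the diffeomorphism $(z_1+z_1z_2z_3,\,z_2,\,z_3)$ of $({\mathbb C}^3,0)$ restricts to the identity on each coordinate $2$-plane but is not the identity. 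The paper closes exactly this gap with an extra reduction: it first shows that every smooth hypersurface is eventually $\phi_k$-invariant, deduces that every smooth submanifold (an intersection of hypersurfaces) is eventually invariant, and then applies the two-dimensional argument to the restriction of $\phi_k$ to an invariant $2$-dimensional submanifold. Some step of this kind is needed to cover $n\geq 3$ as the statement requires.
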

\begin{proof}
Let us consider first the case $n=2$. Suppose that $G$ does not satisfy the finite determination property.
There exists a sequence $(\phi_{k})_{k \geq 1}$ of elements in $G \setminus \{\mathrm{Id}\}$
such that $j^{k} \phi_k = \mathrm{Id}$ for any $k \in {\mathbb N}$.
Given any formal irreducible curve $\gamma$
we have $\lim_{j \to \infty} (\phi_{j}(\gamma), \gamma)  = \infty$.
Property (UI) implies $\phi_{j} (\gamma) = \gamma$ for any $j \geq j_0$
and some $j_{0} \in {\mathbb N}$.

Consider power series $f, g \in {\mathbb C} \{x,y\}$ such that $f(0,0)=g(0,0)=0$ and
$f$ and $g$ have no common irreducible factors. Given any $c \in {\mathbb C}$ the curve
$\gamma_{c}$ given by the equation $f(x,y) - c g(x,y)=0$
(or equivalently the ideal $(f(x,y) - c g(x,y))$)
is invariant by $\phi_{j}$ for $j>>1$.
We define
\[ A_{l} = \{ c \in {\mathbb C} : \phi_{j} (\gamma_{c}) = \gamma_{c} \ \forall j \geq l \} . \]
Since $\cup_{l \in {\mathbb N}} A_{l} = {\mathbb C}$, it follows that $A_{l_0}$ is uncountable for some
$l_0 \in {\mathbb N}$.
Since $(f \circ \phi_j) g - f (g \circ \phi_j)$ vanishes in an uncountable many curves,
we obtain $(f \circ \phi_j) g - f (g \circ \phi_j)=0$ and then
$\frac{f}{g} \circ \phi_j \equiv \frac{f}{g}$ for any $j \geq l_0$.
By applying the previous property to $y/x$, $y^2/x$ and $y/x^{2}$ we obtain
\[ x \circ \phi_{j} \equiv x, \ \ y \circ \phi_j \equiv y \implies \phi_j \equiv \mathrm{Id} \]
for any $j >>1$ contradicting that $(\phi_{j})_{j \geq 1}$ is contained in $G \setminus \{\mathrm{Id}\}$.

Consider a general $n \geq 2$. Let $N$ be a smooth manifold of codimension $1$.
Given a curve $\gamma$ contained in $N$, we have $\phi_{j}(\gamma) \subset N$ for $j>>1$
analogously as for the case $n=2$. Therefore we obtain $\phi_{j}(N) = N$ for any $j >>1$.
Any manifold $M$ is an  intersection of hypersurfaces and hence
$\phi_j (M) = M$ for $j>>1$. The proof of the case $n=2$ implies then $(\phi_{j})_{|M} \equiv \mathrm{Id}$
for any smooth manifold $M$ of dimension $2$ and $j>>1$. We deduce $\phi_{j} \equiv \mathrm{Id}$ for
$j>>1$, obtaining a contradiction.
\end{proof}
\begin{lem}
\label{lem:exa}
Let $n \geq 3$. There exists a subgroup $G$ of $\diff{}{n}$ that satisfies (FD) but
does not hold (UI).
\end{lem}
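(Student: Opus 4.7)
The plan is to exhibit an explicit abelian subgroup $G$ of $\diff{}{3}$ that is $1$-finitely determined yet admits a hypersurface whose orbit produces arbitrarily large intersection multiplicities with a fixed curve; for $n > 3$ the construction extends by letting the diffeomorphisms act trivially on the extra coordinates.

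Concretely, I would fix a sequence $(\lambda_j)_{j \geq 1}$ of complex numbers linearly independent over $\mathbb{Q}$ and define
\[ \phi_j(x,y,z) = (x + \lambda_j z, \, y + z^j, \, z) \]
for $j \in \mathbb{N}$. A direct computation gives $\phi_j \circ \phi_k = \phi_k \circ \phi_j$, so the subgroup $G \subset \diff{}{3}$ they generate is abelian, and each $\phi \in G$ has the form
\[ \phi(x,y,z) = \Bigl( x + \bigl(\textstyle\sum_i n_i \lambda_{j_i}\bigr) z, \; y + \textstyle\sum_i n_i z^{j_i}, \; z \Bigr) \]
for some finite family of distinct positive integers $j_i$ and integers $n_i$. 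Reading off the $1$-jet, $j^{1} \phi = \mathrm{Id}$ forces $\sum_i n_i \lambda_{j_i} = 0$; by $\mathbb{Q}$-linear independence of the $\lambda_j$ all $n_i$ vanish, so $\phi = \mathrm{Id}$, and $G$ is $1$-finitely determined.

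To see that (UI) fails I would test against the hypersurface $V = \{y = 0\}$ and the curve $W = \{x = 0, y = 0\}$. Since $\phi_j^{-1}$ pulls back $y$ to $y - z^j$, the image $\phi_j(V)$ is cut out by $y - z^j$, and Definition \ref{def:intmul} gives
\[ (\phi_j(V), W) = \dim_{\mathbb{C}} \hat{\mathcal O}_{3} / (y - z^j, x, y) = \dim_{\mathbb{C}} \mathbb{C}[[z]]/(z^j) = j, \]
which is unbounded as $j$ varies, so $G$ does not satisfy (UI).

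There is no substantive technical obstacle here; the only design issue is to split the integer data encoding a word in the generators into two independent coordinate slots, one feeding a bounded jet (to force finite determination) and the other feeding the intersection multiplicity (to defeat (UI)). The $\mathbb{Q}$-linearly independent shearing coefficients $\lambda_j$ in the $z$ direction handle the first task, and the polynomials $z^j$ in the second coordinate handle the second. This separation requires three coordinates, which is consistent with the Main Theorem ruling out analogous examples in dimension $2$.
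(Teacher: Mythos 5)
Your construction is correct and is essentially the paper's own argument: a countably generated abelian group of polynomial shears in which $\mathbb{Q}$-linearly independent coefficients on a fixed low-degree monomial force finite determination, while unbounded powers of a third coordinate defeat (UI) against explicit linear test varieties. The only differences are cosmetic — you place the $\mathbb{Q}$-independent data in the first jet (getting $1$-determination where the paper gets $2$-determination via $d_j x_1^2$) and swap which of the two test sets is the curve and which the hypersurface.
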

\begin{proof}
We denote
\[ \phi_j (x_1,x_2, x_3, \cdots, x_n) = (x_1, x_2 + d_j x_{1}^{2} + x_{3}^{j+2} , x_{3}, \cdots, x_n) \in \diff{}{n} \]
for any $j \in {\mathbb N}$ where $\{ d_1, d_2, \cdots \}$ is linearly independent over ${\mathbb Q}$.
We define the group $G= \langle \phi_1, \phi_2, \cdots \rangle$. It is clearly abelian.
The condition of linear independence of $\{ d_1, d_2, \cdots \}$ implies that any diffeomorphism $\phi \in G$
that is not equal to the identity map, has a non-vanishing second jet. Therefore $G$ satisfies (FD).

Denote $\alpha = \{ x_1=x_2=0\}$ and $\beta = \{x_2= x_{4} = \cdots = x_n =0\}$. We have
\[ (\phi_{j}^{-1} (\alpha), \beta) =
\dim_{\mathbb C} \frac{{\mathcal O}_{n}}{(x_1, x_2+ d_j x_{1}^{2} + x_{3}^{j+2},x_2, x_4, \cdots, x_n)} \]
\[ = \dim_{\mathbb C}
\frac{{\mathcal O}_{n}}{(x_1, x_2, x_{3}^{j+2}, x_4, \cdots, x_n)} = j+2  \]
for any $j \in {\mathbb N}$.  Thus $G$ does not satisfy (UI).
\end{proof}
\begin{rem}
The group $G$ defined in Lemma \ref{lem:exa} is not finitely generated. This is fundamental in the example
since finitely generated abelian subgroups of $\diffh{}{n}$ satisfy (UI) \cite{Seigal-Yakovenko:ldi}.
\end{rem}
\section{Intersection properties of curves}
In this section we will see that
the (UI) condition in dimension $2$ is a property about orbits of curves.
We will introduce some notations and techniques, mainly related to blow-ups, that allow
to reduce the proof of the Main Theorem to more manageable cases.

Let $G$ be a subgroup of $\diffh{}{2}$.
Given ideals $I$ and $J$ the set $\{ (\phi^{*} (I), J) : \phi \in G \}$ is bounded
if $\sqrt{I}$ (or $\sqrt{J}$) contains
the maximal ideal $(x,y)$ since then $(x,y)^{k} \subset I$ for some $k \geq 1$ and it is clear that
\[ \dim_{\mathbb C} \frac{\hat{\mathcal O}_{2}}{(\phi^{*}(I),J)} \leq
\dim_{\mathbb C} \frac{\hat{\mathcal O}_{2}}{I} \leq
 \dim_{\mathbb C} \frac{\hat{\mathcal O}_{2}}{(x,y)^{k}}  = k(k-1)/2 < \infty  \]
for any $\phi \in G$. The case $I=0$ or $J=0$ is also simple since
$\dim_{\mathbb C} \hat{\mathcal O}_{2}/ (\phi^{*}(I),J)$ is constant as a function of $\phi \in G$.
Hence we can suppose $\sqrt{I} = (f)$, $\sqrt{J}=(g)$ where $f= f_1 \cdots f_j$, $g =g_1 \cdots g_l$
and $f_1, \cdots, f_j$ (resp. $g_1, \cdots, g_l$) are pairwise relatively prime irreducible elements of
$\hat{\mathcal O}_{2}$. There exists $k \in {\mathbb N}$
such that $(f^{k}) \subset I \subset (f)$ and $(g^{k}) \subset J \subset (g)$.
Given $\phi \in \diffh{}{2}$ we obtain
\[ \dim_{\mathbb C} \frac{\hat{\mathcal O}_{2}}{(\phi^{*}(I),J)} \leq
\dim_{\mathbb C} \frac{\hat{\mathcal O}_{2}}{( f^{k} \circ \phi ,g^{k})}  \leq k^{2}
\dim_{\mathbb C} \frac{\hat{\mathcal O}_{2}}{( f \circ \phi ,g)} = \]
\[ k^{2} \sum_{1 \leq a \leq j, \ 1 \leq b \leq l} \dim_{\mathbb C} \frac{\hat{\mathcal O}_{2}}{( f_a \circ \phi ,g_b)}=
k^{2} \sum_{1 \leq a \leq j, \ 1 \leq b \leq l} (\phi^{-1} (\alpha_a), \beta_b) \]
where $\alpha_a$ (resp. $\beta_b$) is the formal irreducible curve of ideal $(f_a)$ (resp. $(g_b)$).
The previous discussion leads to the first reduction of the setting of the problem.
\begin{rem}
\label{rem:redfic}
Let $G$ be a subgroup of $\diffh{}{2}$.
Then $G$ satisfies (UI) if and only if given any pair of formal
irreducible curves $\alpha$ and $\beta$ the set
$\{ (\phi (\alpha), \beta) : \phi \in G \}$ is finite.
\end{rem}
\subsection{The action of a group on the space of curves}
\label{sec:action}
So far we saw that it suffices to consider curves to check the uniform intersection property for a
group $G$. The main result in this subsection is Proposition \ref{pro:ui_discrete}
that provides a stronger property: $G$ satisfies (UI) if and only if all the orbits of formal irreducible
curves are discrete. We will define the natural topology in the space of curves. Anyway,
the orbit of a formal irreducible curve $\gamma$ is discrete if and only if
$\{ (\phi (\gamma), \gamma) : \phi \in G \}$ is finite.

\strut

Let $\gamma$ be a formal irreducible curve given by an ideal $(f)$ where $f$
is an irreducible element of
$\hat{\mathcal O}_{2}$.  Denote $\gamma_0 = \gamma$ and $p_0 = (0,0) \in {\mathbb C}^{2}$.
\begin{defi}
Let $f \in \hat{\mathcal O}_{2}$. We denote by $m_{0} (f)$ the multiplicity of $f$ at the origin. It is
the integer number $m \geq 0$ such that $f \in (x,y)^{m} \setminus  (x,y)^{m+1}$.
Given a formal irreducible curve $\gamma$ given by an ideal $(f)$ we define the
multiplicity of $\gamma$ at the origin
by $m_{0} (\gamma) = m_{0}(f)$. We say that $\gamma$ is smooth if $m_{0}(\gamma)=1$.
\end{defi}
\begin{defi}
Let $f$ be an irreducible element of $\hat{\mathcal O}_2$ defining a formal irreducible curve $\gamma$.
There exists $(a,b) \in {\mathbb C}^{2} \setminus \{(0,0)\}$ such that
\[ f  - (ax + by)^{m_{0}(\gamma)} \in (x,y)^{m_{0}(\gamma)+1}, \]
cf. \cite[Corollary 2.2.6, p. 46]{Casas:sing}.
The line $\ell = \{ ax + by =0\}$ is the {\it tangent direction} of $\gamma$ at the origin.
\end{defi}
Consider the blow-up $\pi_1: \widetilde{{\mathbb C}^{2}} \to {\mathbb C}^{2}$ of the origin of
${\mathbb C}^{2}$.
Let $p_1 (\gamma)$ be the first infinitely near point of $\gamma$, i.e.
the point in $\pi_{1}^{-1}(p_0)$ corresponding to the tangent direction
$\ell$ of $\gamma$ at the origin. Denote by $\gamma_1$ the strict transform of $\gamma$.
More precisely, suppose $\ell = \{ y=0\}$ and consider  coordinates $(x,t)$
in the first chart of the blow-up
of the origin. Then $\pi_{1}(x,t)=(x,xt)$ is the expression of $\pi_{1}$ in local coordinates.
The formal irreducible curve $\gamma_1$ is equal to the prime ideal $(f(x,xt)/x^{m_{0}(\gamma)})$
of ${\mathbb C}[[x,t]]$. The point $p_1 (\gamma)$ has coordinates $(x,t)=(0,0)$.
We denote ${\mathcal X}_0 = {\mathbb C}^{2}$ and ${\mathcal X}_1 = \widetilde{{\mathbb C}^{2}}$.
By repeating this process we obtain a sequence of blow-ups
\[ \hdots \stackrel{\pi_{m+2}}{\longrightarrow}
{\mathcal X}_{m+1} \stackrel{\pi_{m+1}}{\longrightarrow} {\mathcal X}_{m} \stackrel{\pi_m}{\longrightarrow}
\hdots \stackrel{\pi_3}{\longrightarrow} {\mathcal X}_{2} \stackrel{\pi_2}{\longrightarrow}  {\mathcal X}_{1}
\stackrel{\pi_1}{\longrightarrow} {\mathcal X}_{0} \]
where $p_0=(0,0)$, $\gamma_0 = \gamma$, $\pi_{k+1}$ is the blow-up of ${\mathcal X}_k$
at $p_k (\gamma)$ and $p_{k+1} (\gamma)$ is the point in $\pi_{k+1}^{-1} (p_k (\gamma))$
 corresponding to the tangent direction of $\gamma_k$ at $p_k (\gamma)$
for $k \geq 0$. The strict transform $\gamma_{k+1}$ of $\gamma_k$ by the blow-up $\pi_{k+1}$ passes through the
point $p_{k+1} (\gamma)$ for any $k \geq 0$.
Then  $(p_{k} (\gamma))_{k \geq 0}$ and $(\gamma_k)_{k \geq 0}$ are the sequences
of infinitely near points and strict transforms of $\gamma$ respectively.
We denote $p_k = p_k(\gamma)$ if $\gamma$ is implicit.

The previous construction can be used to provide a natural topology in the space of curves.
\begin{defi}
\label{def:topology}
Given a finite sequence  ${\mathfrak s}=(p_{k})_{0 \leq k \leq n}$ ($n \geq 0$) of
infinitely near points
we define the set ${\mathcal U}_{\mathfrak s}$ consisting of the formal irreducible curves
$\gamma$ such that $p_k (\gamma)=p_k$ for any $0 \leq k \leq n$.
The sets of the form ${\mathcal U}_{\mathfrak s}$ form the base of a topology on the set
of formal irreducible curves.
\end{defi}
\begin{rem}
The space of formal irreducible curves can be interpreted as a subset of the valuations
of ${\mathbb C}[[x,y]]$ that take values in ${\mathbb R}^{+} \cup \{\infty\}$. Such space is a
tree and we just considered the topology induced on the space of curves by any of the
natural topologies of the valuative tree (weak, strong, thin, Hausdorff-Zariski)  \cite{Fa-Jo:LN}.
\end{rem}
\begin{defi}
Given $k \geq 1$
we denote by $m_{k}(\gamma)$ the multiplicity of $\gamma_k$ at $p_k$.
\end{defi}
\begin{lem}
\label{lem:redc}
Let $G$ be a subgroup of $\diffh{}{2}$.
Let $(\alpha_n)_{n \geq 1}$, $(\beta_{n})_{n \geq 1}$ be $G$-orbits of formal irreducible curves.
Let $\gamma$ be a formal irreducible curve.
Then
\begin{itemize}
\item $\lim_{n \to \infty} \alpha_n = \gamma$ if and only if
 $\lim_{n \to \infty} (\alpha_n, \gamma)=\infty$.
\item $\lim_{n \to \infty} (\alpha_n, \gamma) = \lim_{n \to \infty} (\beta_n, \gamma)= \infty$ implies
$\lim_{n \to \infty} (\alpha_n, \beta_n)=\infty$.
\end{itemize}
\end{lem}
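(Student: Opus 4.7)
The plan is to combine Noether's formula for the intersection multiplicity of two distinct formal irreducible plane curves with the observation that the multiplicity sequence is constant along a $G$-orbit. Recall Noether's formula: for distinct irreducible formal curves $\alpha, \delta$ through the origin,
\[ (\alpha, \delta) \;=\; \sum_{p} m_{p}(\alpha)\, m_{p}(\delta), \]
where $p$ ranges over the (finitely many) infinitely near points through which the iterated strict transforms of both $\alpha$ and $\delta$ pass. Since the shared infinitely near points of two irreducible curves form an initial segment of the sequences $(p_k(\alpha))$ and $(p_k(\delta))$, whenever $\alpha$ and $\delta$ share exactly the first $k$ infinitely near points we have $(\alpha,\delta) = \sum_{i=0}^{k-1} m_i(\alpha)\, m_i(\delta) \geq k$.

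Next I would observe that, since $\alpha_n = \phi_n(\alpha_1)$ for some $\phi_n \in G$, each $\phi_n$ lifts successively through the blow-ups $\pi_j$ to a formal isomorphism sending the germ at $p_j(\alpha_1)$ to the germ at $p_j(\alpha_n)$. Consequently the multiplicity sequence $(m_i(\alpha_n))_{i \geq 0}$ is independent of $n$; denote it $(\mu_i)_{i \geq 0}$, with $\mu_0 = m_0(\alpha_1)$. The analogous statement holds for $(\beta_n)$. This uniform bound within an orbit is the crucial point: Noether's formula then translates intersection multiplicities directly into numbers of shared infinitely near points, because large summands $m_i(\alpha_n)$ cannot compensate for few shared points.

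For the first assertion, if $\alpha_n \to \gamma$ then for each $N$ the basic neighborhood $\mathcal{U}_{(p_0(\gamma), \ldots, p_N(\gamma))}$ eventually contains $\alpha_n$, so $\alpha_n$ and $\gamma$ share at least $N+1$ infinitely near points and Noether gives $(\alpha_n, \gamma) \geq N+1$. Conversely, if $\alpha_n \not\to \gamma$, some basic neighborhood $\mathcal{U}_{(p_0(\gamma), \ldots, p_N(\gamma))}$ is avoided by an infinite subsequence $(\alpha_{n_j})$, which then share fewer than $N+1$ infinitely near points with $\gamma$; the uniform bound on $(\mu_i)$ yields $(\alpha_{n_j}, \gamma) \leq N \mu_0 m_0(\gamma)$, contradicting $(\alpha_n, \gamma) \to \infty$. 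The second assertion is an immediate consequence of the first: $(\alpha_n, \gamma) \to \infty$ and $(\beta_n, \gamma) \to \infty$ yield $\alpha_n \to \gamma$ and $\beta_n \to \gamma$, so for any $N$ both sequences eventually lie in $\mathcal{U}_{(p_0(\gamma), \ldots, p_N(\gamma))}$; hence $\alpha_n$ and $\beta_n$ share the first $N+1$ infinitely near points with each other for $n \gg 1$, and Noether gives $(\alpha_n, \beta_n) \geq N+1$. The only non-routine step is the equisingularity claim in the second paragraph: one must verify that formal diffeomorphisms lift through each blow-up $\pi_j$ to a formal isomorphism of germs, so that the multiplicity sequence is preserved along a $G$-orbit. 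This is classical and purely algebraic in the formal setting, but must be invoked explicitly; once granted, the rest is routine translation between Definition \ref{def:topology} and Noether's additivity formula.
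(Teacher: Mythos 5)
Your proof is correct and follows essentially the same route as the paper: Noether's formula expressing $(\alpha,\beta)$ as a sum of products of multiplicities over shared infinitely near points, the invariance of the multiplicity sequence along a $G$-orbit, the boundedness of that (decreasing) sequence to control the intersection number when few infinitely near points are shared, and the deduction of the second assertion from the first. The only difference is cosmetic: you make explicit the lifting of the diffeomorphisms through the blow-ups to justify the equisingularity claim, which the paper states without comment.
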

\begin{proof}
Given formal irreducible curves $\alpha$, $\beta$ we have
\begin{equation}
\label{equ:inp}
 (\alpha, \beta) = m_{0} (\alpha) m_{0} (\beta)  + \sum_{k=1}^{\infty} m_{k} (\alpha) m_{k} (\beta) \delta_k
\end{equation}
where $\delta_k=1$ if the first $k$ infinitely near points of $\alpha$ and $\beta$ coincide
and $\delta_k=0$ otherwise
(cf. \cite[Corollary 8.30]{Ilya-Yako}).
The property  $\lim_{n \to \infty} \alpha_n = \gamma$ implies
$\lim_{n \to \infty} (\alpha_n, \gamma)=\infty$ by Equation (\ref{equ:inp}).

The sequences $(m_{k} (\alpha_n))_{k \geq 0}$  are decreasing
and do not depend on $n$. Hence the property $\lim_{n \to \infty} (\alpha_n, \gamma) =\infty$ implies
$\lim_{n \to \infty} \alpha_n = \gamma$.

Suppose $\lim_{n \to \infty} (\alpha_n, \gamma) = \lim_{n \to \infty} (\beta_n, \gamma)= \infty$.
By the first part of the proof $\alpha_n$ and $\beta_n$
share the first $a_n$ infinitely near points where
$\lim_{n \to \infty} a_n = \infty$.  As a consequence we obtain
$\lim_{n \to \infty} (\alpha_n, \beta_n) = \infty$ by Equation (\ref{equ:inp}).
\end{proof}
\begin{rem}
\label{rem:dis_closed}
A subgroup $G$ of  $\diffh{}{2}$ satisfies the
uniform intersection property if and only if every $G$-orbit
of formal irreducible curves is discrete and closed by Lemma \ref{lem:redc}
\end{rem}

We can understand the uniform intersection property as a property of orbits of curves.
\begin{pro}
\label{pro:closure}
Let $G$ be a subgroup of $\diffh{}{2}$. Let ${\mathcal O}=(\phi (\gamma))_{\phi \in G}$
be a $G$-orbit of formal irreducible curves. Then
\begin{itemize}
\item ${\mathcal O}$ is discrete and hence it is closed or
\item $\overline{\mathcal O}$ is a Cantor set, meaning it is a closed set with no isolated points and
has the cardinality of the continuum.
\end{itemize}
\end{pro}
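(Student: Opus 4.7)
The plan is the following dichotomy. The easy case (discrete implies closed) is handled by noting that a limit point $\alpha \notin \mathcal O$ of distinct $\phi_{n}(\gamma)$ would, via Lemma \ref{lem:redc}, yield $(\phi_{n}(\gamma), \phi_{m}(\gamma)) \to \infty$, equivalently a sequence $\phi_{n}^{-1}\phi_{m}(\gamma)$ in $\mathcal O \setminus \{\gamma\}$ converging to $\gamma$, contradicting the isolation of $\gamma$ in $\mathcal O$. So the substance is to assume $\mathcal O$ is not discrete and prove $\overline{\mathcal O}$ has no isolated points and cardinality of the continuum.

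The first step is a homogeneity argument. The group $G$ acts on the space of formal irreducible curves by homeomorphisms, because formal biholomorphisms preserve intersection multiplicities and hence the convergence characterized by Lemma \ref{lem:redc}. Consequently $\overline{\mathcal O}$ is $G$-invariant and $\mathcal O$ is $G$-homogeneous, so the single non-isolated point guaranteed by the non-discreteness hypothesis propagates to every point of $\mathcal O$; points of $\overline{\mathcal O} \setminus \mathcal O$ are non-isolated automatically. I also record that every $\alpha \in \overline{\mathcal O}$ has the same multiplicity sequence as $\gamma$: for $\phi \in G$ the curves $\phi(\gamma)$ and $\gamma$ are equisingular, and for $\alpha = \lim_{n} \phi_{n}(\gamma)$ the fact that $\alpha$ shares its first $k_{n} \to \infty$ infinitely near points with $\phi_{n}(\gamma)$ transfers the multiplicities; in particular $m_{k}(\alpha) = 1$ for $k$ large.

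The core of the argument is a Cantor-style binary tree construction. For each finite binary word $w$ I would build a basic open set $V_{w} = \mathcal U_{\mathfrak s_{w}}$ with $V_{w0}, V_{w1} \subset V_{w}$ disjoint, $V_{w} \cap \overline{\mathcal O} \neq \emptyset$, and $|\mathfrak s_{w}|$ tending to infinity with $|w|$. The inductive step uses that any $\alpha \in V_{w} \cap \overline{\mathcal O}$ is non-isolated in $\overline{\mathcal O}$, so Lemma \ref{lem:redc} provides $\beta \in V_{w} \cap \overline{\mathcal O}$, $\beta \neq \alpha$, with $(\alpha, \beta)$ arbitrarily large; the first infinitely near point at which $\alpha$ and $\beta$ disagree yields two disjoint basic sub-neighborhoods of length greater than $|\mathfrak s_{w}|$. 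Each infinite branch $s \in \{0,1\}^{\mathbb N}$ then gives rise to a Cauchy sequence $\alpha_{n} \in V_{s_{1}\cdots s_{n}} \cap \overline{\mathcal O}$ with $(\alpha_{n}, \alpha_{m}) \to \infty$ by formula (\ref{equ:inp}).

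The main obstacle I anticipate is promoting such a Cauchy sequence to an actual formal irreducible curve $\alpha_{s}$, rather than to a non-curve valuation (divisorial or infinitely singular). Here the equisingularity step is decisive: since the multiplicities along the common chain of infinitely near points stabilize at $1$, the limit sequence $(q_{k})$ eventually describes a smooth branch at an infinitely near point of $\mathbb C^{2}$, and such a smooth branch is uniquely reconstructed from its successive tangent directions by a Puiseux-style expansion $y = f(x)$, each subsequent $q_{k}$ determining one more coefficient of $f$. The resulting $\alpha_{s}$ lies in $\overline{\mathcal O}$ by closedness, and distinct branches $s \neq s'$ yield distinct curves since their sequences $(q_{k})$ diverge, so $|\overline{\mathcal O}| \geq 2^{\aleph_{0}}$; the reverse inequality is immediate from $|\hat{\mathcal O}_{2}| = 2^{\aleph_{0}}$.
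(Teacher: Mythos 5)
Your argument is correct and follows essentially the same route as the paper: discreteness implies closedness via Lemma \ref{lem:redc}, non-discreteness propagates to every point of $\overline{\mathcal O}$ (you package this as homogeneity of the $G$-action by homeomorphisms, the paper implements the same idea by conjugating with maps $\psi \circ \phi_j \circ \psi^{-1}$ where $\psi(\gamma)$ approximates the given point of the closure), and a binary tree of disjoint basic sets ${\mathcal U}_{\mathfrak s}$ of increasing depth yields the cardinality of the continuum. The one place you go beyond the paper is in justifying that the limit along an infinite branch is a genuine formal irreducible curve rather than a non-curve (e.g.\ infinitely singular) valuation --- the paper only asserts that this limit ``exists by construction'', whereas you correctly pin the existence on the equisingularity of the curves in the orbit, which forces the tail of the limiting sequence of infinitely near points to consist of free points of multiplicity one and hence to be realized by a smooth formal branch.
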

\begin{proof}
Suppose that ${\mathcal O}$ is discrete. Let $\alpha \in \overline{\mathcal O}$. There exists
a sequence $(\phi_{j})_{j \geq 1}$ in $G$
such that $\lim_{j \to \infty} \phi_j (\gamma) = \alpha$.
We get $\lim_{j \to \infty} ( \phi_j (\gamma) , \alpha)= \infty$ by the first property
in Lemma \ref{lem:redc}. Moreover, the second property of Lemma \ref{lem:redc} implies
$\lim_{j \to \infty} (\phi_j(\gamma), \phi_{j+1}(\gamma)) = \infty$.
We deduce $\lim_{j \to \infty} ((\phi_{j+1}^{-1} \circ \phi_{j})(\gamma), \gamma) = \infty$
and then $\lim_{j \to \infty} (\phi_{j+1}^{-1} \circ \phi_{j})(\gamma) = \gamma$.
Since ${\mathcal O}$ is discrete, we obtain
$\phi_{j}(\gamma)  = \phi_{j+1} (\gamma)$ for $j \in {\mathbb N}$ sufficiently big.
In particular $\alpha$ belongs to ${\mathcal O}$ for any $\alpha \in  \overline{\mathcal O}$
and hence ${\mathcal O}$ is closed.

 Suppose that ${\mathcal O}$ is non-discrete from now on.
 There exists a sequence $(\phi_{j})_{j \geq 1}$ in $G$
such that $\lim_{j \to \infty} \phi_j (\gamma) = \gamma$ and $\phi_j (\gamma) \neq \gamma$ for any
$j \geq 1$.
Let $a_j$ be the index of the first infinitely near point of $\gamma$ that is not an infinitely
near point of $\phi_j (\gamma)$, i.e. $p_k (\gamma)= p_k (\phi_j (\gamma))$ if
$0 \leq k < a_j$ and $p_{a_j} (\gamma) \neq  p_{a_j} (\phi_j (\gamma))$.
The construction of $(\phi_{j})_{j \geq 1}$ implies $\lim_{j \to \infty} a_j=\infty$.
We can suppose that $(a_j)_{j \geq 1}$ is a strictly increasing sequence.

Let $\alpha \in \overline{\mathcal O}$. It is of the form $\lim_{n \to \infty} \gamma_n$ where
$(\gamma_n)_{n \geq 1}$ is a sequence of ${\mathcal O}$.
Fix $j \geq 1$. There exists $n_0 \in {\mathbb N}$ such that
$p_k (\gamma_n) = p_k (\alpha)$ for all $n \geq n_0$ and $0 \leq k \leq a_j$.
Let $\psi$ be an element of $G$ such that $\psi (\gamma) = \gamma_{n_0}$.
We define $\alpha_{j,k} = (\psi \circ \phi_j \circ \psi^{-1}) (\gamma_k)$ for $k \geq 1$.
By construction the limit $\alpha_j := \lim_{k \to \infty} \alpha_{j,k}$ exists and belongs to
$\overline{\mathcal O}$. Again by construction, we have
$p_k (\alpha_j)=p_k (\alpha)$ for any $0 \leq k < a_j$ and
$p_{a_j} (\alpha_j) \neq p_{a_j} (\alpha)$. We deduce that $\alpha$ is not an isolated point of
$\overline{\mathcal O}$.

Sequence of infinitely near points can be identified with sequences of points in the
complex projective space ${\bf CP}^{1}$. Thus such set has the cardinality of the continuum as
it does the set of formal irreducible curves. It suffices to show that $\overline{\mathcal O}$
has at least the cardinality of the continuum. Indeed there is an injective map from the set of
sequences in the set $\{0,1\}$ to $\overline{\mathcal O}$. By the previous paragraph there
exist $\gamma_0, \gamma_1 \in {\mathcal O}$ such that
$p_k (\gamma_0)=p_k (\gamma_1)$ for any $0 \leq k < a_1$ and
$p_{a_1} (\gamma_0) \neq p_{a_1} (\gamma_1)$.
Then we can construct $\gamma_{l,s}$ ($l,s \in \{0,1\}$) such that
\begin{itemize}
\item $p_k(\gamma_{l,s})=p_k (\gamma_l)$ for all $0 \leq k \leq a_1$, $l,s \in \{0,1\}$.
\item $p_k(\gamma_{l,0}) = p_k (\gamma_{l,1})$ for all $0 \leq k < a_2$ and $l \in \{0,1\}$.
\item $p_{a_2}(\gamma_{l,0}) \neq p_{a_2} (\gamma_{l,1})$ for any $l \in \{0,1\}$.
\end{itemize}
Analogously, given $\gamma_{l_1, \hdots, l_r}$ $l_1, \hdots, l_r \in \{0,1\}$ we build
$\gamma_{l_1, \hdots, l_r,0}$ and $\gamma_{l_1, \hdots, l_r,1}$ such that they share the
same infinitely near points until the level $a_{r+1}$ (non-included) and they have the
same infinitely near points as $\gamma_{l_1, \hdots, l_r}$ until level $a_r$ (included).
Given any sequence $(l_k)_{k \geq 1}$ in $\{0,1\}$ the limit
$\lim_{k \to \infty } \gamma_{l_1, \hdots, l_k}$ exists by construction and limits corresponding
to different sequences are different.
Since the set of sequences in $\{0,1\}$ has the cardinality of the continuum,
so it does $\overline{\mathcal O}$.
\end{proof}
\begin{cor}
Let $G$ be a countable subgroup of $\diffh{}{2}$.
Then a $G$-orbit of formal irreducible curves is closed if and only if it is discrete.
\end{cor}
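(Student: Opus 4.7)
The plan is to derive this immediately from Proposition \ref{pro:closure}, using countability of $G$ to rule out the Cantor alternative. First, the implication ``discrete $\implies$ closed'' requires no hypothesis on the cardinality of $G$: it is already contained in the statement of Proposition \ref{pro:closure}, since the first bullet of that proposition asserts that a discrete orbit is automatically closed.

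For the converse, I would argue by contradiction. Let ${\mathcal O} = (\phi(\gamma))_{\phi \in G}$ be a closed $G$-orbit and suppose it is not discrete. Proposition \ref{pro:closure} then forces $\overline{\mathcal O}$ to be a Cantor set; in particular, $\overline{\mathcal O}$ has the cardinality of the continuum. Since ${\mathcal O}$ is closed, we have ${\mathcal O} = \overline{\mathcal O}$, so ${\mathcal O}$ itself has the cardinality of the continuum. On the other hand, ${\mathcal O}$ is the image of the countable set $G$ under the map $\phi \mapsto \phi(\gamma)$, hence is at most countable. This contradiction shows that ${\mathcal O}$ must be discrete.

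There is no real obstacle here: the entire content of the corollary is the observation that a Cantor set is uncountable, so a countable group cannot produce an orbit whose closure is a Cantor set. The only thing worth being careful about is to invoke Proposition \ref{pro:closure} in the correct direction (the dichotomy is between ``discrete'' and ``closure is a Cantor set'', not between ``discrete'' and ``orbit is a Cantor set''), which is why one first passes to the closure and then uses that ${\mathcal O}$ being closed means ${\mathcal O} = \overline{\mathcal O}$.
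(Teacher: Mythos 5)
Your proof is correct and is exactly the argument the paper intends: the corollary is stated without proof as an immediate consequence of Proposition \ref{pro:closure}, and the intended reasoning is precisely that a closed non-discrete orbit would equal its closure, hence be a Cantor set of continuum cardinality, which is impossible for the orbit of a countable group. Your care in distinguishing ``the orbit is a Cantor set'' from ``the closure of the orbit is a Cantor set'' is the right point to be careful about, and passing to ${\mathcal O}=\overline{\mathcal O}$ handles it correctly.
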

\begin{defi}
Let $G$ be a subgroup of $\diffh{}{2}$ and $\gamma$ a formal curve. We say that $G$ satisfies the
property $(\mathrm{UI})_{\gamma}$ if the $G$-orbit of $\gamma$ is discrete, i.e. if
$\{ (\phi (\gamma), \gamma) : \phi \in G \}$ is finite.
\end{defi}
The following result is a corollary of Remark \ref{rem:dis_closed} and Proposition
\ref{pro:closure}.
\begin{pro}
\label{pro:ui_discrete}
Let $G$ be a subgroup of $\diffh{}{2}$.  Then $G$ satisfies the uniform intersection property
if and only if every $G$-orbit of formal irreducible curves is discrete.
Equivalently, $G$ satisfies (UI) if and only if it satisfies
$(\mathrm{UI})_{\gamma}$ for any formal irreducible curve $\gamma$.
\end{pro}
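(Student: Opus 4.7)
The plan is to derive the proposition as a direct corollary of the dichotomy in Proposition \ref{pro:closure} together with Remark \ref{rem:dis_closed}. By that remark (itself a consequence of Lemma \ref{lem:redc}), $G$ satisfies (UI) if and only if every $G$-orbit of formal irreducible curves is simultaneously discrete and closed. The task therefore reduces to verifying that, in our setting, discreteness alone of a $G$-orbit already entails closedness.

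This is exactly the content of the first bullet of Proposition \ref{pro:closure}: a discrete $G$-orbit is automatically closed. Equivalently, read contrapositively, if an orbit ${\mathcal O}$ fails to be closed then by Proposition \ref{pro:closure} its closure $\overline{{\mathcal O}}$ is a Cantor set, which has no isolated points; in particular ${\mathcal O} \subseteq \overline{{\mathcal O}}$ cannot be discrete. Thus ``discrete'' and ``discrete and closed'' are the same condition for $G$-orbits of formal irreducible curves. Combining this with Remark \ref{rem:dis_closed} yields the first equivalence: $G$ satisfies (UI) if and only if every $G$-orbit of formal irreducible curves is discrete.

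The second assertion is then a matter of unwinding definitions: the $G$-orbit of $\gamma$ being discrete is exactly the statement that $G$ satisfies $(\mathrm{UI})_\gamma$ (and, by Lemma \ref{lem:redc}, equivalently that $\{(\phi(\gamma), \gamma) : \phi \in G\}$ is finite, where intersection multiplicities equal to $\infty$ are harmless because they correspond to $\phi(\gamma) = \gamma$). No genuine obstacle arises: all the substantive work has been carried out in Proposition \ref{pro:closure} and Remark \ref{rem:dis_closed}, and the present result is essentially their joint reformulation. The only subtle point to keep in mind is that the Cantor-set alternative of Proposition \ref{pro:closure} is precisely what rules out a discrete-but-not-closed orbit, and hence is what makes the reduction go through.
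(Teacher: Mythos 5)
Your proposal is correct and follows exactly the route the paper intends: the paper states this proposition as a corollary of Remark \ref{rem:dis_closed} (UI is equivalent to all orbits being discrete and closed) and the first alternative of Proposition \ref{pro:closure} (discrete orbits are automatically closed), which is precisely the reduction you carry out. The unwinding of $(\mathrm{UI})_\gamma$ via Lemma \ref{lem:redc} in your last paragraph is likewise consistent with the paper's definitions.
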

As a byproduct of the Main Theorem, we obtain  Theorem \ref{teo:fd_discrete}.
We include here its proof for clarity, since it is not used in the proof of the Main Theorem.
\begin{proof}[proof of Theorem \ref{teo:fd_discrete}]
Suppose $G$ is finitely determined. Then $G$ satisfies (UI) by the Main Theorem and all
orbits of formal irreducible curves are discrete by Proposition \ref{pro:ui_discrete}.

Suppose that $G$ is non-finitely determined. There exists a sequence $(\phi_k)_{k \geq 1}$ of
elements in $G \setminus \{ \mathrm{Id} \}$ such that $j^{k} \phi_k \equiv  \mathrm{Id}$ for any
$k \geq 1$. Given any irreducible curve $\gamma$,  we have
$\lim_{k \to \infty} \phi_k (\gamma)=\gamma$.  The proof of Lemma \ref{lem:easy} implies that
the $G$-orbit of $\gamma$ is non-discrete for any generic irreducible curve $\gamma$.
\end{proof}
Next, we see  that the uniform intersection property is invariant under finite extensions.
This result has a technical interest in the sequel.
\begin{pro}
\label{pro:redfig}
Let $G$ be a subgroup of $\diffh{}{2}$ and $\gamma$ an irreducible formal curve.
Consider a finite index subgroup $H$ of $G$. Then $G$ satisfies
$(\mathrm{UI})_{\gamma}$ if and only if $H$ does.
\end{pro}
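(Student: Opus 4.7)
The only nontrivial direction is that $(\mathrm{UI})_{\gamma}$ for $H$ implies $(\mathrm{UI})_{\gamma}$ for $G$; the converse is immediate from the inclusion $\{(h(\gamma),\gamma):h\in H\}\subseteq \{(\phi(\gamma),\gamma):\phi\in G\}$. I will argue by contradiction, assuming $G$ fails $(\mathrm{UI})_{\gamma}$ and manufacturing from the failing sequence a sequence in $H$ that witnesses the failure of $(\mathrm{UI})_{\gamma}$ for $H$ as well.

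Under this assumption, there is a sequence $(\phi_n)_{n\geq 1}$ in $G$ with $(\phi_n(\gamma),\gamma)\to\infty$. By the first bullet of Lemma \ref{lem:redc} this is equivalent to $\phi_n(\gamma)\to\gamma$ in the topology of Definition \ref{def:topology}. Writing $G=\bigsqcup_{i=1}^{r} g_i H$ as a finite disjoint union of left cosets, I apply the pigeonhole principle to extract an infinite subsequence (still denoted $(\phi_n)$) contained in a single coset $g_{i_0}H$. In particular, every product $\phi_n^{-1}\phi_m$ lies in $H$.

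Next, I apply the second bullet of Lemma \ref{lem:redc} to $\alpha_n:=\phi_n(\gamma)$ and $\beta_n:=\phi_{n+1}(\gamma)$: both $(\alpha_n,\gamma)$ and $(\beta_n,\gamma)$ tend to infinity, hence so does $(\phi_n(\gamma),\phi_{n+1}(\gamma))$. The intersection multiplicity of Definition \ref{def:intmul} is invariant under pullback by elements of $\diffh{}{2}$ (the comorphism $\phi_n^{*}$ is an automorphism of $\hat{\mathcal O}_2$), so $(\phi_n(\gamma),\phi_{n+1}(\gamma))=((\phi_n^{-1}\phi_{n+1})(\gamma),\gamma)$. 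Setting $h_n:=\phi_n^{-1}\phi_{n+1}\in H$, I obtain $(h_n(\gamma),\gamma)\to\infty$, contradicting $(\mathrm{UI})_{\gamma}$ for $H$.

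The only delicate point is ensuring that the elements produced from the bad sequence actually lie in $H$; this is exactly the role of the coset pigeonhole in the second paragraph. Everything else is a formal combination of Lemma \ref{lem:redc} with the bi-invariance of intersection multiplicity under formal diffeomorphisms. No finite determination hypothesis on $G$ is invoked, so the statement applies to arbitrary subgroups of $\diffh{}{2}$.
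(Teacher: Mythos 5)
Your proof is correct, but it takes a genuinely different route from the paper. The paper deduces the nontrivial direction from Proposition \ref{pro:closure}: since $H$ satisfies $(\mathrm{UI})_{\gamma}$, the $H$-orbit of $\gamma$ is not only discrete but also \emph{closed}, and the $G$-orbit, being a finite union of translates of the $H$-orbit (each again discrete and closed, as elements of $\diffh{}{2}$ act by homeomorphisms on the space of curves), is therefore discrete and closed. The closedness is the essential input there, since a finite union of discrete sets need not be discrete. You instead bypass Proposition \ref{pro:closure} entirely with a coset pigeonhole plus the telescoping trick $(\phi_n(\gamma),\phi_{n+1}(\gamma))=((\phi_n^{-1}\phi_{n+1})(\gamma),\gamma)$; amusingly, this is the very same telescoping that the paper uses \emph{inside} the proof of Proposition \ref{pro:closure}, so your argument is in effect a more elementary, self-contained extraction of exactly the piece of that proposition needed here, at the cost of not yielding the closedness statement as a by-product. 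One small point you should tighten: to get a genuine contradiction with the finiteness of $\{(h(\gamma),\gamma):h\in H\}$ you need the values $(h_n(\gamma),\gamma)$ to be finite (and eventually pairwise distinct), not all equal to $\infty$. This is easily arranged: since the failure of $(\mathrm{UI})_{\gamma}$ for $G$ means the value set is an infinite subset of ${\mathbb N}\cup\{\infty\}$, you may choose the $\phi_n$ so that the numbers $(\phi_n(\gamma),\gamma)$ are pairwise distinct finite integers, whence the curves $\phi_n(\gamma)$ are pairwise distinct, $h_n(\gamma)=\phi_n^{-1}(\phi_{n+1}(\gamma))\neq\gamma$, and the intersection numbers $(h_n(\gamma),\gamma)$ are finite and tend to infinity. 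With that adjustment the argument is complete and, as you note, requires no finite determination hypothesis.
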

\begin{proof}
The sufficient condition is obvious. Suppose that $H$ satisfies $(\mathrm{UI})_{\gamma}$.
Then the $H$-orbit of $\gamma$ is discrete and closed by
Proposition \ref{pro:closure}. Since the $G$-orbit of $\gamma$ is a finite union of discrete closed
sets, it is discrete and closed.
\end{proof}
The next result is a corollary of Propositions \ref{pro:ui_discrete} and \ref{pro:redfig}.
\begin{cor}
\label{cor:redfi}
Let $G$ be a subgroup of $\diffh{}{2}$. Let $H$ be a finite index subgroup of $G$. Then $G$ satisfies
(UI) if and only if $H$ satisfies (UI).
\end{cor}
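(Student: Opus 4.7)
The plan is to chain together the two results cited as inputs, with no additional work required. By Proposition \ref{pro:ui_discrete}, the (UI) property of a subgroup of $\diffh{}{2}$ is equivalent to the conjunction of the properties $(\mathrm{UI})_\gamma$ as $\gamma$ ranges over all formal irreducible curves. Applying this equivalence both to $G$ and to $H$, I reduce the statement to showing that $G$ satisfies $(\mathrm{UI})_\gamma$ for every $\gamma$ if and only if $H$ does.

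Next, I would invoke Proposition \ref{pro:redfig}, which gives precisely this curve-by-curve equivalence: since $H$ is a finite index subgroup of $G$, for each formal irreducible curve $\gamma$ the property $(\mathrm{UI})_\gamma$ holds for $G$ if and only if it holds for $H$. Applying this equivalence pointwise over all $\gamma$ and then re-assembling via Proposition \ref{pro:ui_discrete} yields the stated if and only if.

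There is no genuine obstacle here; the corollary is a two-step composition of the cited propositions, and the only thing to be careful about is stating the quantifier over $\gamma$ in the right place so that the pointwise equivalence from Proposition \ref{pro:redfig} can be lifted to an equivalence between the two collective conditions characterizing (UI).
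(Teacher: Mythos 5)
Your proof is correct and matches the paper exactly: the paper gives no separate argument but states that the corollary follows from Propositions \ref{pro:ui_discrete} and \ref{pro:redfig}, which is precisely the two-step composition you describe.
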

\begin{rem}
It is straightforward to show that $G$ satisfies (UI) if and only if a finite index subgroup $H$ does
for any dimension.
\end{rem}
\subsection{Reduction via blow-up}
\label{sec:blow}
In this section we will obtain further reductions, since we will get  simpler
expressions for the elements of a subgroup $G$ of  $\diffh{}{2}$ and a formal
irreducible curve $\gamma$ by considering blow-ups of
the infinitely near points of $\gamma$ (Propositions \ref{pro:blowfd}, \ref{pro:blowUI}
and \ref{pro:tame}).

First, we consider the problem of lifting vector fields and diffeomorphisms by the blow-up maps.
We will use the notations in section \ref{sec:action}.
\begin{rem}
\label{rem:lift}
Let $\gamma$ be a formal irreducible curve.
Let $X \in  \hat{\mathfrak X} \cn{2}$. We can lift this vector field to $\pi_{1}^{-1} (0,0)$ as a transversally formal
vector field. Moreover if the tangent line $\ell$ of $\gamma$ at $(0,0)$ is $j^{1} X$-invariant then $X$ can be lifted
to a formal vector field $\overline{X}_1 \in \hat{\mathfrak X} ({\mathcal X}_1, p_1)$.
Analogously if $p_j$ is a singular point of $\overline{X}_{j}$ for $1 \leq j <k$, we can lift $X$
to a formal vector field $\overline{X}_k$ at $p_k$. Moreover $\overline{X}_k$ belongs to
$\hat{\mathfrak X} ({\mathcal X}_k, p_k)$ if $p_k$ is a singular point of $\overline{X}_k$.
In a similar way if $\ell$ is $j^{1} \phi$-invariant for $\phi \in \diffh{}{2}$, we can lift $\phi$ to an element
$\tau_1 (\phi)$ of $\widehat{\mathrm{Diff}}({\mathcal X}_1, p_1)$ and so on.
\end{rem}

\begin{defi}
Let $G$ be a subgroup of $\diffh{}{2}$. We define $G_{\gamma, k}$ as the subgroup of $G$ whose
elements $\phi$ satisfy that $\phi(\gamma)$ and $\gamma$ share the first $k$ infinitely near points.
We define $G_{\gamma} =  \{ \phi \in G : \phi (\gamma) = \gamma \}$.
\end{defi}

Every element $\phi$ of $G$ induces an action in $\pi_{1}^{-1}(p_0)$ by a Mobius transformation.
Indeed $\phi$ belongs to $G_{\gamma,1}$ if and only if $p_1$ is a fixed point of such an action.
Thus we can lift $G_{\gamma, 1}$ to obtain a subgroup
$\tilde{G}_{\gamma, 1}$ of $\widehat{\mathrm{Diff}} (\widetilde{\mathbb C}^{2}, p_1)$.
Let $\tau_1 : G_{\gamma, 1} \to \tilde{G}_{\gamma, 1}$ be the group isomorphism
sending every element of $G_{\gamma, 1}$ to its lift in $\widehat{\mathrm{Diff}} (\widetilde{\mathbb C}^{2}, p_1)$.
All elements of $\tau_{1}(G_{\gamma, k})$ fix the first $k-1$ infinitely near points of $\gamma_1$.
By replacing $G_{\gamma. 1}$ and $\gamma$ with $\tau_1 (G_{\gamma, 2})$
and $\gamma_1$ respectively we obtain a group $\tilde{G}_{\gamma, 2}$ of formal local diffeomorphisms in a
neighborhood of the second infinitely near point $p_2$ of $\gamma$
and a group isomorphism
$\tau_{2} : G_{\gamma, 2} \to \tilde{G}_{\gamma, 2}$. By repeating this construction with
$\tau_{k}(G_{\gamma. k+1})$ we obtain a group $\tilde{G}_{\gamma, k+1}$ of formal local diffeomorphisms
and a group isomorphism $\tau_{k+1}:  G_{\gamma, k+1} \to \tilde{G}_{\gamma, k+1}$  for $k \geq 1$.

\begin{rem}
\label{rem:redk}
Since $(m_{k} (\phi (\gamma)))_{k \geq 0}$ does not depend on $\phi \in \diffh{}{2}$, the set
$\{ (\phi(\gamma), \gamma) : \phi \in G \setminus G_{\gamma, k} \}$ is bounded for any
$k \in {\mathbb N}$ by Equation (\ref{equ:inp}).
\end{rem}

Next let us show that the properties (FD) and (UI) for subgroups of $\diffh{}{2}$ are invariant by blow-up.
 \begin{pro}
 \label{pro:blowfd}
 Let $G$ be a subgroup of $\diffh{}{2}$. Suppose that all elements of $j^{1} G$ fix a direction $\ell$.
 Then $G$ has the finite determination property if and only if $\tilde{G}_{\ell, 1}$ has the finite determination
 property.
 \end{pro}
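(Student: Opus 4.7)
The plan is to use the explicit formula for the lift $\tau_1$ in local coordinates together with the fact that $\tau_1 : G \to \tilde{G}_{\ell,1}$ is a group isomorphism, so that $\phi = \mathrm{Id}$ if and only if $\tau_1(\phi) = \mathrm{Id}$. After a linear change of coordinates I may assume $\ell = \{y=0\}$ and work in the chart of $\pi_1$ in which $\pi_1(x,t) = (x,xt)$. There one has
\[
\tau_1(\phi)(x,t) = \bigl( \phi_1(x,xt),\; \phi_2(x,xt)/\phi_1(x,xt) \bigr),
\]
well-defined because $j^1 \phi$ is invertible and preserves $\{y=0\}$, which forces $\phi_2(x,xt)$ to be divisible by $x$ and $\phi_1(x,xt)/x$ to be a unit. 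The proof then reduces to jet estimates relating $\phi$ and $\tau_1(\phi)$ in both directions.

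For $(\mathrm{FD} \text{ for } \tilde{G}_{\ell,1}) \Rightarrow (\mathrm{FD} \text{ for } G)$, assume $\tilde G_{\ell,1}$ is $k$-finitely determined, take $\phi \in G$ with $j^{k+1}\phi = \mathrm{Id}$, and write $\phi = (x+A,\, y+B)$ with $A,B \in (x,y)^{k+2}$. The substitution $y = xt$ sends a monomial $x^iy^j$ with $i+j \geq k+2$ to $x^{i+j}t^j$, which belongs to $(x)^{k+2}$. A direct computation then gives $\tilde{\phi}_1 - x = A(x,xt) \in (x,t)^{k+2}$ and
\[
\tilde{\phi}_2 - t \;=\; \bigl( B(x,xt) - t\, A(x,xt) \bigr)\big/\phi_1(x,xt) \;\in\; (x,t)^{k+1},
\]
using that $\phi_1(x,xt)$ is a unit multiple of $x$. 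Hence $j^k \tau_1(\phi) = \mathrm{Id}$, so $\tau_1(\phi) = \mathrm{Id}$ and $\phi = \mathrm{Id}$; thus $G$ is $(k+1)$-finitely determined.

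For the converse, assume $G$ is $k$-finitely determined and take $\tilde{\phi} \in \tilde{G}_{\ell,1}$ with $j^{2k+1}\tilde{\phi} = \mathrm{Id}$; write $\tilde\phi = (x+\tilde A,\, t+\tilde B)$ with $\tilde A, \tilde B \in (x,t)^{2k+2}$ and set $\phi = \tau_1^{-1}(\tilde\phi)$. The relation $\phi_1(x,xt) = x + \tilde A$ determines $\phi_1 \in \hat{\mathcal O}_2$ uniquely: expanding $\tilde A = \sum_{i+j \geq 2k+2} a_{ij} x^i t^j$ and formally substituting $t = y/x$ produces $a_{ij} x^{i-j} y^j$, and since $\phi_1 - x \in \hat{\mathcal O}_2$ we must have $a_{ij} = 0$ whenever $i < j$; the surviving monomials have $(x,y)$-degree $i \geq \lceil (i+j)/2 \rceil \geq k+1$. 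The same argument applied to $\phi_2$ via $\phi_2(x,xt) = (x+\tilde A)(t+\tilde B)$ yields $\phi_2 - y \in (x,y)^{k+1}$. Hence $j^k \phi = \mathrm{Id}$, so $\phi = \mathrm{Id}$ and $\tilde\phi = \mathrm{Id}$; thus $\tilde{G}_{\ell,1}$ is $(2k+1)$-finitely determined.

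The main delicate step is this last one: informal inversion of $\tau_1$ via $t = y/x$ introduces negative powers of $x$, and it is precisely the a priori constraint $\phi \in \diffh{}{2}$ that forces $a_{ij} = 0$ for $i < j$ and thereby produces the lower bound $i \geq \lceil (i+j)/2 \rceil$ on the $(x,y)$-degree. This quadratic-in-$k$ estimate is the technical heart of the proposition; by contrast, the forward substitution $y = xt$ going in the other direction is immediate since $\pi_1$ is polynomial.
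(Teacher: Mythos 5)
Your proof is correct and follows essentially the same route as the paper: both rely on the explicit coordinate formula for the lift and the two jet estimates (loss of one jet under $y=xt$, loss of roughly half the jets under the inverse substitution $t=y/x$, the latter coming from the constraint that $\phi$ must lie in $\diffh{}{2}$). The only difference is presentational — you argue directly with explicit determinacy constants, while the paper phrases the same estimates contrapositively via sequences $(\phi_k)$ with $j^k\phi_k=\mathrm{Id}$.
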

 \begin{proof}
 Suppose $\ell = \{y=0\}$ without lack of generality.
 Suppose that $G$ does not satisfy (FD).
 Then there exists a sequence $(\phi_{k})_{k \geq 1}$ in $G \setminus \{\mathrm{Id}\}$
 such that $j^{k} \phi_{k} = \mathrm{Id}$ for any $k \in {\mathbb N}$.
 Let   $\tilde{\phi}_{k} (x,t) = ( \tilde{a}_{k}(x,t), \tilde{b}_{k}(x,t) )$ be the lift of $\phi_{k}(x,y) = (a_{k}(x,y), b_{k}(x,y))$
 to $\tilde{G}_{\ell, 1}$. We have
 \[ \tilde{\phi}_{k} (x,t) = \left( \tilde{a}_{k}(x,t), \tilde{b}_{k}(x,t) \right) =
 \left( a_{k}(x,xt), \frac{b_{k}(x,xt)}{a_{k}(x,xt)} \right) \]
 for $k \in {\mathbb N}$.
 We obtain $j^{k} \tilde{a}_{k}(x,xt) = x$, $j^{k-1} \tilde{b}_{k}(x,t)= t$ and $\tilde{\phi}_{k} \not \equiv \mathrm{Id}$
 for any $k \in {\mathbb N}$. Therefore $\tilde{G}_{\ell, 1}$ does not satisfy (FD).

 Suppose that $\tilde{G}_{\ell, 1}$ does not satisfy (FD).
 There exists a sequence $(\phi_{k})_{k \geq 1}$ in $G \setminus \{\mathrm{Id}\}$ such that
 $\phi_{k} \neq \mathrm{Id}$ and $j^{k} \tilde{\phi}_{k} \equiv \mathrm{Id}$ for any $k \in {\mathbb N}$. Since we have
 \[ \phi_{k}(x,y) =
 \left( \tilde{a}_{k} \left( x , \frac{y}{x} \right), \tilde{a}_{k} \tilde{b}_{k}  \left( x , \frac{y}{x} \right) \right), \]
 we obtain $j^{[\frac{k+1}{2}]-1} a_{k} = x$ and $j^{[\frac{k+2}{2}]-1} b_{k} = y$. Thus
 $G$ does not hold (FD).
 \end{proof}

 \begin{pro}
 \label{pro:blowUI}
 Let $G$ be a subgroup of $\diffh{}{2}$.
 Consider a formal irreducible curve $\gamma$ and $1 \leq j \leq k$.
 Then $G$ satisfies $(\mathrm{UI})_{\gamma}$ if and only if $\tau_{j} ({G}_{\gamma, k})$ satisfies
 $(\mathrm{UI})_{\gamma_j}$.
 \end{pro}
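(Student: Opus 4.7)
The plan is to reduce the intersection multiplicities on $G$ to those on $\tau_j(G_{\gamma,k})$ via a telescoping identity that follows from Equation (\ref{equ:inp}). First, I would invoke Remark \ref{rem:redk}: the set $\{(\phi(\gamma),\gamma) : \phi \in G \setminus G_{\gamma,k}\}$ is bounded and hence finite, so $G$ satisfies $(\mathrm{UI})_{\gamma}$ if and only if the set $\{(\phi(\gamma),\gamma) : \phi \in G_{\gamma,k}\}$ is finite. This already matches the problem to the subgroup $G_{\gamma,k}$, on which the lift $\tau_j$ (with $j \leq k$) is well-defined.

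The key step is to establish, for every $\phi \in G_{\gamma,k}$, the identity
\[ (\phi(\gamma),\gamma) \;=\; C_j + (\tau_j(\phi)(\gamma_j),\gamma_j), \qquad C_j := \sum_{i=0}^{j-1} m_i(\gamma)^2. \]
Since $\phi$ is a formal isomorphism, the multiplicity sequence is preserved, so $m_i(\phi(\gamma)) = m_i(\gamma)$ for all $i \geq 0$. The hypothesis $\phi \in G_{\gamma,k}$ gives $\delta_1 = \cdots = \delta_k = 1$ in Equation (\ref{equ:inp}), and the strict transforms $\phi(\gamma)_j$ and $\gamma_j$ at $p_j$ share their first $k-j$ infinitely near points. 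Applying Equation (\ref{equ:inp}) to both $(\phi(\gamma),\gamma)$ and $(\phi(\gamma)_j,\gamma_j)$ the infinite tails $\sum_{i>k} m_i(\gamma)^2 \delta_i$ agree; subtracting them gives the displayed identity. Finally, $\phi(\gamma)_j = \tau_j(\phi)(\gamma_j)$ by the construction of $\tau_j$ recalled in Section \ref{sec:action}.

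To conclude, $C_j$ does not depend on $\phi$ and $\tau_j$ is injective on $G_{\gamma,k} \subseteq G_{\gamma,j}$, so the set $\{(\phi(\gamma),\gamma) : \phi \in G_{\gamma,k}\}$ is finite precisely when $\{(\psi(\gamma_j),\gamma_j) : \psi \in \tau_j(G_{\gamma,k})\}$ is finite, i.e.\ precisely when $\tau_j(G_{\gamma,k})$ satisfies $(\mathrm{UI})_{\gamma_j}$. The only nontrivial element of the argument is the telescoping identity, which is essentially a bookkeeping of how much of the intersection is absorbed by the first $j$ blow-ups; I do not anticipate any serious obstacle, since all the needed combinatorics is encoded in Equation (\ref{equ:inp}).
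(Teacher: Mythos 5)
Your argument is correct and follows essentially the same route as the paper: reduce to $G_{\gamma,k}$ via Remark \ref{rem:redk}, then use Equation (\ref{equ:inp}) to show that $(\phi(\gamma),\gamma)$ and $(\tau_j(\phi)(\gamma_j),\gamma_j)$ differ by a constant independent of $\phi$ (the paper states the one-blow-up identity $(\phi(\gamma),\gamma)=m_0(\gamma)^2+(\tau_1(\phi)(\gamma_1),\gamma_1)$ and iterates, whereas you write the telescoped version with $C_j=\sum_{i=0}^{j-1}m_i(\gamma)^2$ directly). The only blemish is the phrase ``the infinite tails $\sum_{i>k}$ agree'': the tails that cancel are those with index $i\geq j$, not $i>k$, but this does not affect the identity you state.
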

 \begin{proof}
Fix $k \geq 1$. The group $G$ satisfies $(\mathrm{UI})_{\gamma}$
if and only if $G_{\gamma, k}$ does by Remark \ref{rem:redk}. Equation (\ref{equ:inp}) implies
\[ (\phi (\gamma), \gamma) = m_{0} (\gamma)^{2} + (\tau_{1}(\phi) (\gamma_1), \gamma_1)  \]
for any $\phi \in G_{\gamma, 1}$. Thus $G$ satisfies $(\mathrm{UI})_{\gamma}$ if and only if
$\tau_{1} ({G}_{\gamma, k})$ satisfies $(\mathrm{UI})_{\gamma_1}$. By iterating the previous argument
we obtain that $G$ satisfies $(\mathrm{UI})_{\gamma}$ if and only if
$\tau_{j} ({G}_{\gamma, k})$ satisfies $(\mathrm{UI})_{\gamma_j}$ for all $1 \leq j \leq k$.
\end{proof}

Next, we see that blow-ups can be used to reduce the proof of property $\mathrm{(UI)}_{\gamma}$
to simpler settings.

 \begin{defi}
 Let $G$ be a finitely determined subgroup of $\diffh{}{2}$ and $\gamma$ be a formal smooth curve.
 We say that the pair $(G,\gamma)$ is {\it tame} if there exists a formal smooth curve $\alpha$
 transversal to $\gamma$ that is $G$-invariant and the tangent direction of $\gamma$ at $(0,0)$
 is $j^{1} G$-invariant.
 \end{defi}

Tame pairs are easier to handle. Indeed $j^{1} G$ is diagonalizable and thus $G'$ is contained in
$\diffh{1}{2}$. Since a (FD) subgroup of $\diffh{1}{n}$ is nilpotent (Lemma \ref{lem:fdinil}),
it follows that $G$ is solvable. This allows to use the properties of solvable subgroups of $\diffh{}{2}$
and solvable Lie subalgebras of $\hat{\mathfrak X} \cn{2}$ in next section
to prove the Main Theorem.

 \begin{pro}
 \label{pro:tame}
 Suppose that $G$ satisfies property $(\mathrm{UI})_{\gamma}$ for any tame pair $(G,\gamma)$.
 Then any finitely determined subgroup of $\diffh{}{2}$ satisfies (UI).
 \end{pro}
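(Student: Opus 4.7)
The plan is to reduce to the tame case via the blow-up machinery of this section, combining it with Proposition \ref{pro:ui_discrete} and Remark \ref{rem:redk}. Let $G$ be a finitely determined subgroup of $\diffh{}{2}$. By Proposition \ref{pro:ui_discrete}, it suffices to establish $(\mathrm{UI})_{\gamma}$ for each formal irreducible curve $\gamma$, so I fix one such $\gamma$ and aim to construct a tame pair whose $(\mathrm{UI})$ property is equivalent to that of $(G,\gamma)$.

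First I would choose $N \in \mathbb{N}$ large enough that the strict transform $\gamma_{N}$ is smooth at $p_{N}$, which is possible by the desingularization of plane curves. Enlarging $N$ by $1$ if necessary, I would moreover arrange that $\gamma_{N}$ meets the newly created exceptional component $E_{N}$ transversally at $p_{N}$; this transversality is automatic as soon as a smooth strict transform is blown up at one of its points, since in local coordinates the new exceptional divisor becomes horizontal and the new strict transform vertical. Next I would replace $G$ by $G_{\gamma, N+1}$, which is still finitely determined as a subgroup of $G$ and satisfies $(\mathrm{UI})_{\gamma}$ if and only if $G$ does, by Remark \ref{rem:redk}. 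Lifting through the first $N$ blow-ups, I would set $H := \tau_{N}(G_{\gamma, N+1})$; this group is finitely determined by iterated application of Proposition \ref{pro:blowfd}, and by Proposition \ref{pro:blowUI} it satisfies $(\mathrm{UI})_{\gamma_{N}}$ if and only if $G$ satisfies $(\mathrm{UI})_{\gamma}$.

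I would then declare $\alpha$ to be the germ of $E_{N}$ at $p_{N}$, which is a smooth formal curve transversal to $\gamma_{N}$ by construction. The total exceptional divisor $(\pi_{1} \circ \cdots \circ \pi_{N})^{-1}(0)$ is setwise preserved by every lift of an element of $G$, because each such lift fixes the origin; an induction on the blow-up level then confirms that every irreducible component is preserved individually, so in particular $\alpha$ is $H$-invariant. Moreover, elements of $H$ fix the next infinitely near point $p_{N+1}$ of $\gamma$, so $j^{1}H$ preserves the tangent direction of $\gamma_{N}$ at $p_{N}$. Hence $(H,\gamma_{N})$ is tame, the hypothesis yields $(\mathrm{UI})_{\gamma_{N}}$ for $H$, and unwinding the reductions gives $(\mathrm{UI})_{\gamma}$ for $G$.

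The main obstacle is the manufacture of the $G$-invariant smooth transversal $\alpha$: finite determination by itself bestows no geometric structure on the ambient space beyond the fixed origin, and there is no reason in general for an invariant smooth curve to exist at the initial level. The blow-up reduction furnishes precisely such a structure, since the exceptional divisor is tautologically invariant under any lift of a biholomorphism fixing the origin, and after one additional blow-up past the desingularization step it is automatically transversal to the strict transform.
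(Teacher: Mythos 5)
Your proposal is correct and follows essentially the same route as the paper: desingularize $\gamma$ by blow-ups until the strict transform is smooth and transversal to the (invariant) exceptional component, pass to $G_{\gamma,k+1}$, lift via $\tau_k$ using Propositions \ref{pro:blowfd} and \ref{pro:blowUI} together with Remark \ref{rem:redk}, and observe the resulting pair is tame. The extra details you supply (invariance of the exceptional component, preservation of the tangent direction via fixing $p_{N+1}$) are exactly what the paper leaves as ``by construction.''
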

 \begin{proof}
 Consider a finitely determined subgroup $G$ of $\diffh{}{2}$ and a formal irreducible curve $\gamma$.
 It suffices to prove $(\mathrm{UI})_{\gamma}$ by Proposition \ref{pro:ui_discrete}.

By choosing $k \in {\mathbb N}$ sufficiently big, we can suppose, up to make
$k$ blow-ups following the first $k$ infinitely near points of $\gamma$, that
$\gamma_{k}$ is smooth and
transversal to a $G$-invariant smooth divisor $\alpha$.
By replacing $G$ with $G_{\gamma, k+1}$ and applying $k$ times Proposition
\ref{pro:blowfd}, we obtain that $\tau_{k}(G_{\gamma, k+1})$ is finitely determined.
Moreover $(\tau_{k}(G_{\gamma, k+1}), \gamma_{k})$ is tame by construction.
By hypothesis $\tau_{k}(G_{\gamma, k+1})$
satisfies $(\mathrm{UI})_{\gamma_{k}}$.
We deduce that $G$ satisfies $(\mathrm{UI})_{\gamma}$ by Proposition \ref{pro:blowUI}.
\end{proof}
%
%
\section{Lie algebras of formal vector fields}
One of the ingredients in the proof of the Main Theorem is the classification of nilpotent and solvable
Lie subalgebras of $\hat{\mathfrak X} \cn{2}$ \cite{JR:arxivdl}.
The main idea of this section is that an infinite dimensional nilpotent (or solvable) Lie subalgebra of
$\hat{\mathfrak X} \cn{2}$ has a high degree of symmetry and very particular properties.
\begin{pro}
\label{pro:nla}
Let ${\mathfrak g}$ be a nilpotent subalgebra of $\hat{\mathfrak X} \cn{2}$. Then either ${\mathfrak g}$
is finite dimensional as a complex vector space or there exists $X \in {\mathfrak g}$ that has
a first integral in $\hat{\mathcal O}_{2} \setminus {\mathbb C}$ such that
\begin{equation}
\label{equ:nla}
{\mathfrak g} \subset \{ f X : f \in \hat{\mathcal O}_{2} \ \mathrm{and} \ X(f)=0 \} .
\end{equation}
In particular if ${\mathfrak g}$ is infinite dimensional then ${\mathfrak g}$ is abelian.
\end{pro}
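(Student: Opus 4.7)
The plan is to combine the classification of nilpotent Lie subalgebras of $\hat{\mathfrak X}\cn{2}$ from \cite{JR:arxivdl} with a direct argument exploiting the non-trivial center of $\mathfrak g$. If $\dim_{\mathbb C} \mathfrak g < \infty$ there is nothing to show, so I assume $\dim_{\mathbb C} \mathfrak g = \infty$. Since $\mathfrak g$ is nilpotent and non-zero, the center $Z(\mathfrak g)$ is non-trivial, and I would pick a non-zero $X \in Z(\mathfrak g)$ (to be refined later).

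The first main step is to prove that every $Y \in \mathfrak g$ is parallel to $X$, meaning $X \wedge Y = 0$ as a formal two-form, equivalently $Y = f X$ for some $f \in \hat{K}_2$. The dichotomy is: if some $Y \in \mathfrak g$ satisfies $X \wedge Y \neq 0$, then $X$ and $Y$ are two commuting, formally independent vector fields in dimension two, and the classification in \cite{JR:arxivdl} forces the ambient nilpotent Lie algebra to embed into a finite dimensional algebraic Lie algebra; this contradicts $\dim_{\mathbb C} \mathfrak g = \infty$. Hence $\mathfrak g \subset \hat{K}_2 \cdot X$.

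I would then refine the choice of $X$ inside $Z(\mathfrak g)$ so as to make the parallelism integral. Specifically, take $X$ to be ``as saturated as possible'' within $\mathfrak g$: no factorization $X = h X_0$ with $X_0 \in \mathfrak g$ and $h \in \hat{\mathcal O}_2$ a non-unit. For such $X$ one argues that for every $Y \in \mathfrak g$ the coefficient $f \in \hat{K}_2$ with $Y = fX$ actually lies in $\hat{\mathcal O}_2$. Since $X$ is central, the Leibniz identity $0 = [X, fX] = X(f)\, X$ gives $X(f) = 0$, so $f$ is a formal first integral of $X$. The assumption $\dim_{\mathbb C} \mathfrak g = \infty$ forces the space of such $f$ to be infinite dimensional over $\mathbb C$, hence to contain non-constant elements, yielding the required first integral of $X$ in $\hat{\mathcal O}_2 \setminus \mathbb C$.

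Finally, for any $f_1, f_2 \in \hat{\mathcal O}_2$ with $X(f_1) = X(f_2) = 0$, the commutator $[f_1 X, f_2 X] = (f_1 X(f_2) - f_2 X(f_1)) X = 0$, so $\mathfrak g$ is abelian. The main obstacle is the first step: the structural fact that an infinite dimensional nilpotent Lie subalgebra of $\hat{\mathfrak X}\cn{2}$ cannot contain two commuting formally independent vector fields rests on the substantive classification from \cite{JR:arxivdl}. The refinement of $X$ in the second step, needed to upgrade $\hat{K}_2$-parallelism to the integral form demanded by the statement, is also delicate and requires controlling the filtration of $\mathfrak g$ by coefficient ideals.
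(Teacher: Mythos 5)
Your outline tracks the paper's strategy at the level of the dichotomy on $\dim_{\hat{K}_2}({\mathfrak g}\otimes_{\mathbb C}\hat{K}_2)$, but both of your main steps have problems. For the first step (no $Y\in{\mathfrak g}$ can be $\hat{K}_2$-independent of the central element $X$), you defer entirely to the classification of \cite{JR:arxivdl}. The paper does not do this: it gives a short self-contained argument. Since $X$ is central, every $gX+hY\in{\mathfrak g}$ has $X(g)=X(h)=0$; iterating $\mathrm{ad}\,Y$ and using nilpotency of class $k+1$ gives $Y^{k+1}(g)=Y^{k+1}(h)=0$; and the spaces $V_j=\{g\in\hat{K}_2 : X(g)=0,\ Y^{j}(g)=0\}$ satisfy $V_1={\mathbb C}$ and $\dim_{\mathbb C}V_{j+1}\le\dim_{\mathbb C}V_j+1$ via $g\mapsto Y(g)$, whence $\dim_{\mathbb C}{\mathfrak g}\le 2(k+1)<\infty$. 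Citing the external classification for precisely the statement being established here is at best uninformative and at worst circular, since this proposition is essentially a piece of that classification.

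The second step is where the genuine gap lies. Your ``saturation'' condition --- no factorization $X=hX_0$ with $X_0\in{\mathfrak g}$ and $h$ a non-unit --- does not control the coefficients of the \emph{other} elements of ${\mathfrak g}$. If $fX\in{\mathfrak g}$ with $f=g/h$ in lowest terms, then coprimality only shows that $h$ divides the coefficients of $X$ in the UFD $\hat{\mathcal O}_2$, i.e. $X=h\,X_0$ for some formal vector field $X_0$ that need \emph{not} lie in ${\mathfrak g}$; so your condition is never triggered and gives nothing. Nor does any multiplicity inequality $m_0(g)\ge m_0(h)$ by itself force $g/h\in\hat{\mathcal O}_2$ (consider $x^2/y$). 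The paper closes this with two inputs you omit entirely: Lemma \ref{lem:nla}, which shows that $\dim_{\mathbb C}{\mathfrak g}=\infty$ already forces $X$ to admit a first integral in $\hat{\mathcal O}_2\setminus{\mathbb C}$, and the Mattei--Moussu/Cerveau--Mattei dichotomy (Remark \ref{rem:first}), which then guarantees that every meromorphic first integral $f$ of $X$ satisfies $f\in\hat{\mathcal O}_2$ or $1/f\in\hat{\mathcal O}_2$. Taking $X$ of \emph{minimal multiplicity} in ${\mathfrak g}$ (the correct normalization, rather than saturation) gives $m_0(g)\ge m_0(h)$, which combined with the dichotomy forces $g/h\in\hat{\mathcal O}_2$ and yields (\ref{equ:nla}). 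You flag this step as ``delicate'' but supply no mechanism for it; without Remark \ref{rem:first} and Lemma \ref{lem:nla} the upgrade from $\hat{K}_2$-parallelism to the integral statement does not go through. The closing computation that ${\mathfrak g}$ is abelian is fine.
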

\begin{rem}
\label{rem:first}
The properties of the subfields of $\hat{K}_2$ of first integrals of solvable Lie subalgebras of
$\hat{\mathfrak X} \cn{2}$ are fundamental to classify them \cite{JR:arxivdl}.
Let $X \in \hat{\mathfrak X} \cn{2} \setminus \{0\}$.
We denote ${\mathcal M} = \{ f \in \hat{K}_2 : X(f)=0 \}$. We have
\begin{itemize}
\item If ${\mathcal M} \cap \hat{\mathcal O}_{2} \neq {\mathbb C}$ then any element $g$ of ${\mathcal M}$
satisfies either $g \in \hat{\mathcal O}_{2}$ or $1/g \in \hat{\mathcal O}_{2}$.
Moreover ${\mathcal M} \cap \hat{\mathcal O}_{2}$ is equal to ${\mathbb C}[[f_0]]$ for some
$f_0 \in {\mathcal M} \cap {\mathfrak m}$.
\item If ${\mathcal M} \cap \hat{\mathcal O}_{2} = {\mathbb C}$
but  ${\mathcal M} \cap \hat{K}_{2} \neq {\mathbb C}$  then we get
${\mathcal M}= {\mathbb C} (f_0)$ for some
$f_0 \in {\mathcal M} \cap (\hat{K}_{2} \setminus {\mathbb C})$.
\end{itemize}
The previous results were proved by Mattei-Moussu \cite{MaMo:Aen} and Cerveau-Matei \cite{Ce-Ma:Ast}
respectively for the case $X \in \mathfrak{X} \cn{2}$. They can be easily adapted for formal vector fields
(cf. \cite[Proposition 5.1]{JR:solvable25}).
\end{rem}
Next lemma provides a hint of how the existence of first integrals has an influence on whether or not
a Lie subalgebra of $\hat{\mathfrak X} \cn{2}$ is finite dimensional.
\begin{lem}
\label{lem:nla}
Let $X \in \hat{\mathfrak X} \cn{2} \setminus \{0\}$ that has no first integral in
$\hat{\mathcal O}_{2} \setminus {\mathbb C}$. Then the complex vector space
\[ {\mathfrak g} := \{  f X : f \in \hat{K}_{2}, \ X(f)=0 \ \mathrm{and} \ f X \in \hat{\mathfrak X} \cn{2} \} \]
is finite dimensional.
\end{lem}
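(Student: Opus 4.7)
The plan is to use the classification of first integrals from Remark \ref{rem:first}. Let ${\mathcal M} := \{f \in \hat{K}_{2} : X(f) = 0\}$. The hypothesis says ${\mathcal M} \cap \hat{\mathcal O}_{2} = {\mathbb C}$, so by Remark \ref{rem:first} either ${\mathcal M} = {\mathbb C}$ (and then ${\mathfrak g} = {\mathbb C} X$ is trivially finite-dimensional), or ${\mathcal M} = {\mathbb C}(f_{0})$ for some $f_{0} \in \hat{K}_{2} \setminus {\mathbb C}$. I would focus on the non-trivial case and write $f_{0} = p/q$ with $p, q$ coprime in the UFD $\hat{\mathcal O}_{2}$. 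Neither $p$ nor $q$ can be a unit, since otherwise $f_{0}$ or $1/f_{0}$ would be a non-constant first integral in $\hat{\mathcal O}_{2}$, contradicting the hypothesis; thus $p, q \in {\mathfrak m}$.

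Next I would reformulate membership in ${\mathfrak g}$. Write $X = X_{1} \partial_{x} + X_{2} \partial_{y}$ and let $h := \gcd(X_{1}, X_{2})$ in $\hat{\mathcal O}_{2}$. A short argument in the UFD $\hat{\mathcal O}_{2}$ shows that, for $f = a/b \in \hat{K}_{2}$ with $a, b$ coprime, $f X \in \hat{\mathfrak X} \cn{2}$ if and only if $b \mid h$, equivalently $f h \in \hat{\mathcal O}_{2}$. Hence ${\mathfrak g}$ is ${\mathbb C}$-isomorphic, via $f X \mapsto f$, to $V := \{f \in {\mathcal M} : f h \in \hat{\mathcal O}_{2}\}$, so it suffices to bound $\dim_{\mathbb C} V$.

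Every non-zero $f \in {\mathcal M} = {\mathbb C}(f_{0})$ may be written as $f = P(p, q)/Q(p, q)$, where $P, Q \in {\mathbb C}[u, v]$ are coprime homogeneous polynomials of the same degree. The pivotal observation is that $P(p, q)$ and $Q(p, q)$ remain coprime in $\hat{\mathcal O}_{2}$: any two distinct linear forms $\alpha p + \beta q$ and $\gamma p + \delta q$ (with $[\alpha : \beta] \neq [\gamma : \delta]$) are coprime in $\hat{\mathcal O}_{2}$, because a common irreducible factor would divide suitable ${\mathbb C}$-combinations recovering $p$ and $q$ separately, contradicting $\gcd(p, q) = 1$. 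It follows that $f h \in \hat{\mathcal O}_{2}$ is equivalent to $Q(p, q) \mid h$ in $\hat{\mathcal O}_{2}$.

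For the final step I would bound $\dim_{\mathbb C} V$ directly. Decomposing $Q = \prod_{l} (c_{l} u + d_{l} v)^{n_{l}}$ in ${\mathbb C}[u, v]$, the divisibility $Q(p, q) \mid h$ forces $(c_{l} p + d_{l} q)^{n_{l}} \mid h$ for every $l$. There are only finitely many ratios $[c : d] \in {\mathbb P}^{1}({\mathbb C})$ with $(c p + d q) \mid h$, since each such ratio produces at least one prime factor of $h$ and $h$ has only finitely many; and for each such $[c : d]$ the exponent $n_{l}$ is bounded by the $(c p + d q)$-adic valuation of $h$. Letting $Q^{*}$ denote the maximal admissible denominator and $D^{*} := \deg Q^{*}$, every $f \in V$ can be rewritten as $f = P'(p, q)/Q^{*}(p, q)$ with $P'$ a homogeneous polynomial of degree $D^{*}$, so $\dim_{\mathbb C} V \leq D^{*} + 1 < \infty$. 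The delicate point throughout is the transfer of coprimality from ${\mathbb C}[u, v]$ to $\hat{\mathcal O}_{2}$, which ultimately rests on $\gcd(p, q) = 1$; the remainder is a clean book-keeping argument.
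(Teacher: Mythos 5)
Your proof is correct, but it takes a genuinely different route from the paper's after the common opening moves (dispatching the case ${\mathcal M}={\mathbb C}$, invoking Remark \ref{rem:first} to get ${\mathcal M}={\mathbb C}(f_0)$ with $f_0=p/q$ and $p,q\in{\mathfrak m}$ coprime, and reducing membership of $fX$ in $\hat{\mathfrak X}\cn{2}$ to a divisibility condition on the gcd $h$ of the coefficients of $X$). The paper then argues geometrically: it shows that the truncation map $f\mapsto j^{k}f$ is injective on the relevant space for $k$ large, because the pencil $f_0=c$ produces infinitely many $X$-invariant branches and hence a dicritical divisor $D$ in the desingularization of the dual form of $X$; an element of the kernel yields a first integral vanishing along $D$, which must vanish identically. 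You argue purely algebraically in the UFD $\hat{\mathcal O}_{2}$: every first integral is $P(p,q)/Q(p,q)$ with $P,Q$ coprime homogeneous of equal degree, coprimality transfers from ${\mathbb C}[u,v]$ to $\hat{\mathcal O}_{2}$ because distinct linear forms in $p,q$ are coprime when $\gcd(p,q)=1$, and the admissible denominators are controlled by the finitely many linear forms in $p,q$ dividing $h$, with exponents bounded by $m_{0}(h)$. Your route avoids desingularization and dicritical divisors altogether and yields an explicit bound $\dim_{\mathbb C}{\mathfrak g}\le \deg Q^{*}+1$; the paper's route is less elementary but leans on machinery (blow-ups, invariant branches of foliations) that it has already set up and reuses elsewhere. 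One harmless imprecision: since the paper's $\hat{\mathfrak X}\cn{2}$ requires coefficients in ${\mathfrak m}$, your equivalence ``$fX\in\hat{\mathfrak X}\cn{2}$ if and only if $fh\in\hat{\mathcal O}_{2}$'' is strictly speaking only the forward implication, so ${\mathfrak g}$ embeds into your $V$ rather than being equal to it --- which is all the finite-dimensionality claim needs.
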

\begin{proof}
Denote ${\mathcal M} = \{ f \in \hat{K}_2 : X(f)=0 \}$.
We can suppose that $X$ has non-constant first integrals since otherwise
$\dim_{\mathbb C} {\mathfrak g} =1$. Hence we have
${\mathcal M} = {\mathbb C} (f_0)$ where $f_0 \not \in \hat{\mathcal O}_{2}$ and
$1/f_0 \not \in \hat{\mathcal O}_{2}$ by Remark \ref{rem:first}.

The formal vector field $X$ is of the form $g X'$ where the coefficients of $X'$ do not have a common
factor in $\hat{\mathcal O}_{2}$. It suffices to  show $\dim_{\mathbb C} V < \infty$ where
\[ V =  \left\{ f \in \hat{\mathcal O}_{2} : X \left( \frac{f}{g} \right) = 0 \right\}  . \]
Consider the map
\[
\begin{array}{ccc}
V & \stackrel{\Lambda}{\rightarrow} & j^{k} \hat{\mathcal O}_{2} \\
f & \mapsto & j^{k} f .
\end{array}
\]
Let us show $\mathrm{Ker} (\Lambda) = \{0\}$ if $k>>1$.
In this way we obtain
$\dim_{\mathbb C} V \leq \dim_{\mathbb C}  j^{k} \hat{\mathcal O}_{2} < \infty$.

Since every irreducible component of $f_0 = c$ for any $c \in {\mathbb C}$ is $X$-invariant,
there are infinitely many $X$-invariant formal irreducible curves.
Therefore there exists a dicritic divisor $D$ in the desingularization of
the dual form $\omega$ of $X$ ($\omega (X)=0$), i.e.
$D$ is not an invariant curve for the ``formal foliation" defined by the strict transform of
$\omega$.
The function $g$ vanishes along $D$ with order $m$ for some $m \in {\mathbb N}$.
Consider $k > m$. Any element $f$ of $\mathrm{Ker}(\Lambda)$ vanishes along $D$ with order at least
$m+1$ and then $f/g$ vanishes along $D$. Since $D$ is dicritic and $X(f/g)=0$, it follows $f/g =0$ and
then $f=0$.
\end{proof}
\begin{proof}[proof of Proposition \ref{pro:nla}]
Denote ${\mathcal M} = \{ f \in \hat{K}_2 : X(f)=0 \}$.
Suppose $\dim_{\hat{K}_{2}} ({\mathfrak g} \otimes_{\mathbb C} \hat{K}_2) = 2$.
Consider an element $X$ of ${\mathcal C}^{k} {\mathfrak g} \setminus \{0\}$ where $k+1 \geq 1$ is the
nilpotency class of ${\mathfrak g}$; it belongs to the center of ${\mathfrak g}$.
 Let $Y$ be an element of ${\mathfrak g}$ that is not of the form
$h X$ for some $h \in \hat{K}_{2}$. The elements of ${\mathfrak g}$ are of the form
\[ g X + h Y \]
where $g,h \in \hat{K}_2$. Since $X$ is in the center of ${\mathfrak g}$, we get
$[X,Y]=0$ and then $[X, gX + hY] = X(g) X + X(h) Y =0$. We deduce
$g, h \in {\mathcal M}$ for any $g X + hY \in {\mathfrak g}$. The property
\begin{equation}
\label{equ:itlie}
\underbrace{[Y, \cdots , [Y, gX+hY] ]  }_{m \text{ times}} = Y^{m} (g) X + Y ^{m} (h) Y
\end{equation}
and  ${\mathcal C}^{k+1} {\mathfrak g} = \{0\}$ imply
$Y^{k+1}(g)=Y^{k+1} (h) = 0$ for any  $g X + hY \in {\mathfrak g}$.
Consider the complex vector space
\[ V_j := \{ g \in \hat{K}_2:  X(g)=0 \ \mathrm{and} \ Y^{j}(g)=0 \} \]
for $j \in {\mathbb N}$. We showed that an element $g X + h Y$ of ${\mathfrak g}$
satisfies $g,h \in V_{k+1}$.
We have $V_{1} = {\mathbb C}$.  The linear map
\[
\begin{array}{ccc}
V_{j+1} & \stackrel{\Lambda_j}{\rightarrow} & V_j \\
g & \mapsto & Y(g)
\end{array}
\]
is well-defined since $X (Y(g)) = Y(X(g)) = Y(0) =0$ and $Y^{j} (Y(g)) =0$ for any $g \in V_{j+1}$.
The kernel of $\Lambda_{j}$ coincides with $V_1$ and hence
$\dim_{\mathbb C} V_{j+1} \leq \dim_{\mathbb C} V_1 + \dim_{\mathbb C} V_{j}$.
We obtain $\dim_{\mathbb C} V_j \leq j$ for any $j \in {\mathbb N}$ by induction on $j$.
Since $\dim_{\mathbb C} {\mathfrak g} \leq 2 \dim V_{k+1} \leq 2k+2$, the Lie algebra ${\mathfrak g}$
is finite dimensional.

Suppose now $\dim_{\hat{K}_{2}} ({\mathfrak g} \otimes_{\mathbb C} \hat{K}_2)  = 1$
and $\dim_{\mathbb C} {\mathfrak g} = \infty$.
Let $X$ be a non-vanishing element of the center of ${\mathfrak g}$.
Every element $Z$ of ${\mathfrak g}$ is of the form
$f X$ where $f \in \hat{K}_2$ and satisfies $[X, f X] = X(f) X =0$. We deduce
\[ {\mathfrak g} \subset \{ f X : f \in \hat{K}_{2} \ \mathrm{and} \ X(f) = 0 \} . \]
In particular ${\mathfrak g}$ is an abelian Lie algebra.
Then    $X$ has a first integral in
$\hat{\mathcal O}_{2} \setminus {\mathbb C}$ by Lemma \ref{lem:nla}.
In particular every first integral $h$ of $X$ in $\hat{K}_{2}$ satisfies either
$h \in \hat{\mathcal O}_{2}$ or $1/h \in  \hat{\mathcal O}_{2}$ by Remark \ref{rem:first}.
By replacing $X= a(x,y) \partial /\partial x + b(x,y) \partial /\partial x$ if necessary by an element
in ${\mathfrak g}$ whose
multiplicity $\min (m_{0} (a), m_{0}(b))$ at the origin is minimal,
every element of ${\mathfrak g}$ is of the form $(g/h) X$ where $g,h \in \hat{\mathcal O}_{2}$
and $m_{0}(g) \geq m_{0}(h)$.
Since $g/h$ is a first integral of $X$ and either $g/h$ or $h/g$ belongs to $\hat{\mathcal O}_{2}$,
it follows that $g/h \in \hat{\mathcal O}_{2}$. Hence we obtain Property (\ref{equ:nla}).
\end{proof}
Next, we consider general Lie subalgebras of $\hat{\mathfrak X} \cn{2}$.
\begin{pro}
\label{pro:sla}
Let ${\mathfrak g}$ be a Lie subalgebra of $\hat{\mathfrak X} \cn{2}$ such that
$\dim_{\mathbb C} {\mathfrak g} = \infty$. Then either ${\mathfrak g}$ is abelian or
$\dim_{\mathbb C} {\mathfrak g}'   = \infty$.
\end{pro}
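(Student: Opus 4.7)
The plan is to prove the contrapositive: assuming $\dim_{\mathbb{C}} \mathfrak{g} = \infty$ and $\dim_{\mathbb{C}} \mathfrak{g}' < \infty$, I will show that $\mathfrak{g}$ is abelian. The first step is to pass to the centralizer $\mathfrak{h} = \{Y \in \mathfrak{g} : [Y, \mathfrak{g}'] = 0\}$ of the finite-dimensional ideal $\mathfrak{g}'$. Since the adjoint representation $\mathfrak{g} \to \mathfrak{gl}(\mathfrak{g}')$ has kernel $\mathfrak{h}$, $\mathfrak{g}/\mathfrak{h}$ is finite dimensional and $\dim_{\mathbb{C}} \mathfrak{h} = \infty$. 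Moreover $[\mathfrak{h}, \mathfrak{h}] \subset \mathfrak{g}'$ while $[\mathfrak{h}, \mathfrak{g}'] = 0$ by construction, so $\mathfrak{h}$ is nilpotent of class at most $2$. Proposition \ref{pro:nla} then forces $\mathfrak{h}$ to be abelian.

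Next I show that the center $Z(\mathfrak{g})$ is also infinite dimensional. For each $Y \in \mathfrak{g}$, the operator $\mathrm{ad}_Y : \mathfrak{h} \to \mathfrak{h}$ has image in $\mathfrak{g}' \cap \mathfrak{h}$, which is finite dimensional, so $\ker(\mathrm{ad}_Y|_{\mathfrak{h}})$ has finite codimension in $\mathfrak{h}$. Choosing $Y_1, \dots, Y_k \in \mathfrak{g}$ whose classes span $\mathfrak{g}/\mathfrak{h}$, the intersection $\bigcap_{i=1}^{k} \ker(\mathrm{ad}_{Y_i}|_{\mathfrak{h}})$ has finite codimension in $\mathfrak{h}$ and, using $\mathfrak{h}$ abelian together with $Z(\mathfrak{g}) \subset \mathfrak{h}$, coincides with $Z(\mathfrak{g})$. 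Applying Proposition \ref{pro:nla} to $Z(\mathfrak{g})$, I obtain $X' \in Z(\mathfrak{g})$ with a non-constant first integral $f_0 \in \hat{\mathcal{O}}_2$, and via Remark \ref{rem:first} the set $V := \{f \in \mathbb{C}[[f_0]] : fX' \in Z(\mathfrak{g})\}$ is an infinite-dimensional $\mathbb{C}$-subspace of $\mathbb{C}[[f_0]]$.

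The critical step, which I expect to be the main hurdle, is to show that every $Y \in \mathfrak{g}$ annihilates $f_0$. For $Y \in \mathfrak{g}$ and $fX' \in Z(\mathfrak{g})$ the identity $[Y, fX'] = Y(f) X' + f [Y, X']$ reduces to $Y(f) X'$, since $X'$ is central, and must vanish, so $Y(f) = 0$ for every $f \in V$. Writing $f = \sum_{i \geq 0} c_i f_0^i$, one computes $Y(f) = Y(f_0) \cdot \sum_{i \geq 1} i c_i f_0^{i-1}$; if $Y(f_0)$ were nonzero, the $\mathbb{C}$-linear independence in $\hat{\mathcal{O}}_2$ of the powers of $f_0$ would force every $f \in V$ to be constant, contradicting $\dim_{\mathbb{C}} V = \infty$. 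Hence $Y(f_0) = 0$ for every $Y \in \mathfrak{g}$.

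To finish, every $Y \in \mathfrak{g}$ lies in the $\hat{\mathcal{O}}_2$-module of formal vector fields annihilating $f_0$, which is generated by a primitive vector field $X_0$ (for instance a suitable rescaling of the Hamiltonian of $f_0$), so $Y = g X_0$ for some $g \in \hat{\mathcal{O}}_2$. Writing $X' = h X_0$ and using the centrality of $X'$, the identity $[X', g X_0] = (h X_0(g) - g X_0(h)) X_0 = 0$ gives $X_0(g/h) = 0$, so $m := g/h \in \hat{K}_2$ is a first integral of $X'$ and $Y = m X'$. Then for any $Y_1 = m_1 X'$ and $Y_2 = m_2 X'$ in $\mathfrak{g}$ we have $[Y_1, Y_2] = (m_1 X'(m_2) - m_2 X'(m_1)) X' = 0$ because $X'(m_j) = 0$, so $\mathfrak{g}$ is abelian as desired.
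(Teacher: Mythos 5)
Your proof is correct and follows essentially the same route as the paper: pass to the centralizer of the finite-dimensional ${\mathfrak g}'$, use Proposition \ref{pro:nla} to make it abelian, deduce that $Z({\mathfrak g})$ is infinite dimensional, apply Proposition \ref{pro:nla} again, and conclude that every element of ${\mathfrak g}$ is a first-integral multiple of a central $X'$. The only (harmless) detour is your ``critical step'': once $[Y,fX']=Y(f)X'=0$ for a single non-constant $f$ with $fX'\in Z({\mathfrak g})$, you already have a common non-constant first integral of all of ${\mathfrak g}$, so the chain-rule argument showing $Y(f_0)=0$ is not needed.
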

\begin{proof}
Suppose $\dim_{\mathbb C} {\mathfrak g}'   < \infty$.  Our goal is showing that ${\mathfrak g}$ is abelian.
Let $\{ Y_1, \cdots, Y_p \}$ be a basis of ${\mathfrak g}'$.
Denote by ${\mathfrak h}$ the kernel of the linear map
\[
\begin{array}{ccc}
{\mathfrak g} & \rightarrow & ({\mathfrak g}')^{p} \\
Z & \mapsto &  ([Z,Y_1], \cdots, [Z,Y_p]) .
\end{array}
\]
It satisfies ${\mathfrak h} = \{ Z \in {\mathfrak g} : [Z, Y] =0 \ \forall Y \in {\mathfrak g}' \}$.
Moreover ${\mathfrak h}$ is an ideal of ${\mathfrak g}$ by Jacobi's formula.
Since $ ({\mathfrak g}')^{p}$ is finite dimensional, we obtain
$\dim_{\mathbb C} {\mathfrak g}/{\mathfrak h} < \infty$.
It is clear that ${\mathfrak h}$ is nilpotent and since $\dim_{\mathbb C} {\mathfrak h} = \infty$, it follows
that ${\mathfrak h}$ is abelian by Proposition \ref{pro:nla}.
Consider $\{ Z_1, \cdots, Z_q \} \subset {\mathfrak g}$
such that $\{ Z_1 + {\mathfrak h} , \cdots, Z_q + {\mathfrak h} \}$ is a basis of  ${\mathfrak g}/{\mathfrak h}$.
Denote by ${\mathfrak j}$ the kernel of the linear map
\[
\begin{array}{ccc}
{\mathfrak h} & \rightarrow & ({\mathfrak g}')^{q} \\
Z & \mapsto &  ([Z,Z_1], \cdots, [Z,Z_q]).
\end{array}
\]
The vector space ${\mathfrak j}$ satisfies $\dim_{\mathbb C} {\mathfrak h}/ {\mathfrak j} < \infty$.
It is clearly contained in the center $Z({\mathfrak g})$ of ${\mathfrak g}$ and hence
$\dim_{\mathbb C} {\mathfrak g}/ Z({\mathfrak g}) < \infty$ and $\dim_{\mathbb C} Z({\mathfrak g}) = \infty$.
Since $Z({\mathfrak g})$ is abelian and infinite dimensional, Proposition \ref{pro:nla} implies the existence of
non-trivial elements $X$ and $f X$ of $Z ({\mathfrak g})$ such that $X(f) =0$ and
$f \in \hat{\mathcal O}_{2} \setminus {\mathbb C}$. Given $Z \in {\mathfrak g}$ we have
$[Z, X]=0$ and $[Z, f X] = Z(f) X = 0$. Thus $f$ is a first integral of $Z$ and hence $Z$ is of the form
$g X$ where $g \in \hat{K}_2$. Since $0= [X, g X] = X(g) X$ for any $g X \in {\mathfrak g}$, we deduce that
${\mathfrak g}$ is abelian.
\end{proof}
\section{Algebraic properties of groups of local biholomorphisms}
We need to relate the properties of a subgroup $G$ of $\diffh{}{2}$ with the properties of its Lie algebra
of formal nilpotent vector fields. In this section we explain how to define such Lie algebras and the properties
of the Lie correspondence. The definitions and results
were introduced in \cite{JR:arxivdl} and \cite{JR:solvable25} and are included here for the sake of
simplicity.
\subsection{Zariski closure of a group of formal diffeomorphisms}
Given $\phi \in \diffh{}{n}$ we can interpret $\phi$ as a family $(\phi_{k})_{k \geq 1}$
of linear maps. More precisely $\phi_{k}$ is the element of $\mathrm{GL}({\mathfrak m}/ {\mathfrak m}^{k+1})$
defined by
\[
\begin{array}{ccc}
{\mathfrak m}/ {\mathfrak m}^{k+1} & \stackrel{\phi_{k}}{\rightarrow} & {\mathfrak m}/ {\mathfrak m}^{k+1} \\
f +  {\mathfrak m}^{k+1} & \mapsto & f \circ \phi + {\mathfrak m}/ {\mathfrak m}^{k+1}.
\end{array}
\]
We define $\pi_{k} : \diffh{}{n} \to D_k$ by the formula $\pi_{k} (\phi) = \phi_k$ where
\[ D_{k} := \{ A \in \mathrm{GL}({\mathfrak m}/ {\mathfrak m}^{k+1}) : A(f g) = A(f) A(g) \
\forall f,g \in {\mathfrak m}/ {\mathfrak m}^{k+1} \}. \]
Notice that $D_k$ is the group
of isomorphisms of the ${\mathbb C}$-algebra ${\mathfrak m}/ {\mathfrak m}^{k+1}$. It is an algebraic
subgroup of $\mathrm{GL}({\mathfrak m}/ {\mathfrak m}^{k+1})$
since the equation $A(f g) = A(f) A(g)$ is algebraic in the coefficients of the matrix $A$.
We have $D_k = \{ \phi_k : \phi \in \diffh{}{n} \}$ \cite[Lemma 2.1]{JR:solvable25}.

We have a natural map $\pi_{k,l}: D_{k} \to D_{l}$ for $k \geq l$ defined by $\pi_{k,l} (\phi_k) = \phi_l$.
The projective limit $\varprojlim_{k \in {\mathbb N}} D_{k}$ is called group of formal diffeomorphisms.
Given $f \in {\mathfrak m}$ then  $(\phi_{k}(f + {\mathfrak m}^{k+1}))_{k \geq 1}$ belongs to
$\varprojlim_{k \in {\mathbb N}} {\mathfrak m}/ {\mathfrak m}^{k+1}$.
We can interpret $(\phi_{k}(f + {\mathfrak m}^{k+1}))_{k \geq 1}$ as an element of ${\mathfrak m}$
since ${\mathfrak m}$ and $\varprojlim_{k \in {\mathbb N}} {\mathfrak m}/ {\mathfrak m}^{k+1}$
are naturally identified. Moreover the map
\[
\begin{array}{ccc}
\varprojlim_{k \in {\mathbb N}} D_{k} & \rightarrow & \diffh{}{n} \\
(\phi_{k})_{k \geq 1} & \mapsto & ( (\phi_{k}(z_{1} + {\mathfrak m}^{k+1}))_{k \geq 1}, \cdots,
(\phi_{k}(z_{n} + {\mathfrak m}^{k+1}))_{k \geq 1})
\end{array}
\]
is an anti-isomorphism of groups \cite[Lemma 2.2]{JR:solvable25}.

We define $G_k$ as the Zariski closure of $\pi_{k}(G)$ in the linear algebraic group $D_k$.
From the surjective nature of $\pi_{k,l} : \pi_{k}(G) \to \pi_{l}(G)$, it is possible to deduce that
$\pi_{k,l} : G_k \to G_l$ is a well-defined and surjective morphism of algebraic groups for all
$k \geq l \geq 1$ \cite[Lemma 2.5]{JR:solvable25}.
\begin{defi}
\cite{JR:arxivdl}
Let $G$ be a subgroup of $\diffh{}{n}$. We define the {\it Zariski closure} (or also {\it pro-algebraic closure})
$\overline{G}$ of $G$ as the group $\varprojlim_{k \in {\mathbb N}} G_{k}$. Indeed we have
\[ \overline{G} = \{ \phi \in \diffh{}{n} : \phi_k \in G_k \ \forall k \in {\mathbb N} \} . \]
We say that $G$ is pro-algebraic if $G = \overline{G}$.
\end{defi}
\begin{defi}
\label{def:krull}
Let $G$ be a subgroup of $\diffh{}{n}$. The closure of $G$ in the {\it Krull topology}
(${\mathfrak m}$-{\it adic topology}) is the subgroup of
$\diffh{}{n}$ consisting of the elements $\phi$ of  $\diffh{}{n}$ such that there exists $\eta(k) \in G$ satisfying
$j^{k} \phi = j^{k} \eta (k)$ for any $k \in {\mathbb N}$.
\end{defi}
\begin{rem}
$\overline{G}$ is closed in the Krull topology.
\end{rem}
\subsection{The Lie correspondence}
We denote by $L_k$ the Lie algebra of derivations of the ${\mathbb C}$-algebra
${\mathfrak m}/ {\mathfrak m}^{k+1}$. Let $X \in \hat{\mathfrak X} \cn{n}$.  The map $X_k$ defined by
\[
\begin{array}{ccc}
{\mathfrak m}/ {\mathfrak m}^{k+1} & \stackrel{X_{k}}{\rightarrow} & {\mathfrak m}/ {\mathfrak m}^{k+1} \\
f +  {\mathfrak m}^{k+1} & \mapsto & X(f) + {\mathfrak m}/ {\mathfrak m}^{k+1}.
\end{array}
\]
belongs to $L_k$. Analogously as for formal diffeomorphisms $\hat{\mathfrak X} \cn{n}$ can be identified
with $\varprojlim_{k \in {\mathbb N}} L_{k}$. Moreover $L_k$ is the Lie algebra of $D_k$ for any
$k \in {\mathbb N}$. Given $X=(X_k)_{k \geq 1} \in \varprojlim  L_{k}$ we can define
$\mathrm{exp}(X) = (\mathrm{exp}(X_k))_{k \geq 1} \in \varprojlim  D_{k}$.
The previous definition implies
\[ f \circ \mathrm{exp} (X) = \sum_{k=0}^{\infty} \frac{X^{k}(f)}{k!} \]
where $X^{0}(f)=f$ and $X^{k+1}(f) = X(X^{k}(f))$ for $k \geq 0$. In particular we obtain
\[ \mathrm{exp}(X)(z_1, \cdots, z_n) = \left(
\sum_{k=0}^{\infty} \frac{X^{k}(z_1)}{k!}, \cdots, \sum_{k=0}^{\infty} \frac{X^{k}(z_n)}{k!}
\right) . \]

Given a subgroup $G$ of $\diffh{}{n}$ we define the Lie algebra of $G$ (or $\overline{G}$)
as the Lie algebra $\varprojlim_{k \in {\mathbb N}} {\mathfrak g}_{k}$ where ${\mathfrak g}_{k}$
is the Lie algebra of $G_k$. It is clearly closed in the Krull topology (the definition is analogous to
Definition \ref{def:krull}).
The Lie algebra ${\mathfrak g}$ of $G$ satisfies
\begin{equation}
\label{equ:defliealg}
 {\mathfrak g} = \{ X \in \hat{\mathfrak X} \cn{n} : \mathrm{exp}(t X) \in \overline{G} \ \forall t \in
 {\mathbb C} \},
 \end{equation}
cf. Proposition 2 of \cite{JR:arxivdl}.
We define the connected component of identity
$\overline{G}_0= \varprojlim_{k \in {\mathbb N}} G_{k,0}$
where $G_{k,0}$ is the connected component of $\mathrm{Id}$ of $G_k$.
\begin{pro}[{\cite[Proposition 2]{JR:arxivdl}} {\cite[Proposition 2.3, Remark 2.9]{JR:solvable25}}]
\label{pro:lie}
Let $G$ be a subgroup of $\diffh{}{n}$.
Then $\overline{G}_0$ is a finite index normal pro-algebraic subgroup of $\overline{G}$
generated by $\mathrm{exp} ({\mathfrak g})$.
Moreover if $G$ consists of unipotent formal diffeomorphisms then
$\mathrm{exp} : {\mathfrak g} \to \overline{G}$ is a bijection and ${\mathfrak g}$ consists of formal
nilpotent vector fields.
\end{pro}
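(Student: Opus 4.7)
The plan is to work level by level with the finite-dimensional linear algebraic groups $G_k \leq D_k$ and then take the projective limit. At each level $k$, I would invoke the classical theory of linear algebraic groups: the identity component $G_{k,0}$ is a finite-index Zariski-closed normal subgroup of $G_k$, its Lie algebra (in the algebraic-group sense) coincides with $\mathfrak{g}_k$, and $G_{k,0}$ is generated as an abstract group by $\exp(\mathfrak{g}_k)$. The hope is then to descend all four conclusions to $\overline{G} = \varprojlim G_k$ and $\mathfrak{g} = \varprojlim \mathfrak{g}_k$.

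The crucial technical point is the finiteness of $[\overline{G}:\overline{G}_0]$. Here I would analyze the transition maps $\pi_{k,l}: G_k \to G_l$ for $k \geq l \geq 1$. The kernel of $\pi_{k,l}: D_k \to D_l$ consists of those $\mathbb{C}$-algebra automorphisms that act trivially modulo $\mathfrak{m}^{l+1}/\mathfrak{m}^{k+1}$; written in a basis of monomials ordered by total degree, such elements are upper-triangular with $1$'s on the diagonal, hence unipotent, and therefore form a \emph{connected} algebraic subgroup of $D_k$. Restricting, $\ker(\pi_{k,l}|_{G_k})$ is a connected unipotent subgroup of $G_{k,0}$. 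An extension of a connected algebraic group by a connected algebraic group is connected, so the preimage $\pi_{k,l}^{-1}(G_{l,0})$ is connected, hence equals $G_{k,0}$. This forces the induced morphism $G_k/G_{k,0} \to G_l/G_{l,0}$ to be an isomorphism of finite groups for all $k \geq l$. Consequently $\overline{G}/\overline{G}_0 = \varprojlim G_k/G_{k,0} \cong G_1/G_{1,0}$, which is finite since $G_1$ is an algebraic subgroup of $\mathrm{GL}(n,\mathbb{C})$. Normality of $\overline{G}_0$ is inherited level by level, and the identity $\pi_{k,l}(\exp X) = \exp \pi_{k,l}(X)$ coming from the formula $\exp(X)(z_i) = \sum_{j \geq 0} X^{j}(z_i)/j!$, combined with $G_{k,0} = \langle \exp(\mathfrak{g}_k) \rangle$ at each level, yields $\overline{G}_0 = \langle \exp(\mathfrak{g}) \rangle$.

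For the moreover clause, assume every $\phi \in G$ is unipotent. On the graded piece $\mathfrak{m}^j/\mathfrak{m}^{j+1}$, the action of $\phi_k$ agrees with the $j$-th symmetric power of $j^{1}\phi$, which is itself unipotent; ordering the basis by total degree then shows that $\phi_k$ is a unipotent element of $D_k$. Kolchin's theorem implies that the Zariski closure of a set of unipotent elements in a linear algebraic group is a connected unipotent algebraic subgroup. Hence each $G_k$ is connected and unipotent, and $\mathfrak{g}_k$ consists of nilpotent endomorphisms of $\mathfrak{m}/\mathfrak{m}^{k+1}$. For a connected unipotent algebraic group, $\exp: \mathfrak{g}_k \to G_k$ is a bijective algebraic isomorphism, with inverse given by the truncated logarithm (which terminates because of nilpotence). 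Passing to the projective limit, $\exp: \mathfrak{g} \to \overline{G}$ is bijective, and the nilpotence of each $\mathfrak{g}_k$ yields that every element of $\mathfrak{g} = \varprojlim \mathfrak{g}_k$ is a formal nilpotent vector field in the sense of $\hat{\mathfrak X}_N \cn{n}$.

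The main obstacle, as anticipated above, is to show that the kernels $\ker(\pi_{k,l})$ are \emph{connected} so that the component groups $G_k/G_{k,0}$ stabilize; once this is secured, the remaining steps (normality, generation by exponentials, bijectivity of $\exp$ in the unipotent case) reduce to standard inverse-limit manipulations together with the Kolchin-theoretic classification of unipotent algebraic groups.
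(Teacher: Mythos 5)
The paper does not prove Proposition \ref{pro:lie}; it imports it from \cite{JR:arxivdl} and \cite{JR:solvable25}, so there is no internal argument to compare against. Judging your proposal on its own terms: the overall strategy (work in each $D_k$, use the theory of linear algebraic groups, pass to the projective limit) is the right one, and two of your steps are genuinely correct and well chosen. The stabilization argument for the component groups is sound: $\ker(\pi_{k,l})\cap D_k$ consists of automorphisms inducing the identity on ${\mathfrak m}/{\mathfrak m}^{l+1}$ with $l\geq 1$, hence is unipotent and therefore connected in characteristic zero; an extension of connected algebraic groups by connected ones is connected, so $\pi_{k,l}^{-1}(G_{l,0})=G_{k,0}$ and $\overline{G}/\overline{G}_0$ injects into the finite group $G_1/G_{1,0}$. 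Likewise the unipotent case is handled correctly: $\phi_k$ acts unipotently on the graded pieces ${\mathfrak m}^{j}/{\mathfrak m}^{j+1}$, Kolchin gives that each $G_k$ is connected unipotent, $\exp:{\mathfrak g}_k\to G_k$ is a bijection at every level compatibly with the transition maps, and a projective limit of compatible bijections is a bijection.

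The genuine gap is the sentence deducing $\overline{G}_0=\langle\exp({\mathfrak g})\rangle$ from $G_{k,0}=\langle\exp({\mathfrak g}_k)\rangle$ by ``standard inverse-limit manipulations.'' Generation by a subset does \emph{not} commute with projective limits: an element $\phi=(\phi_k)_{k\geq 1}$ of $\varprojlim G_{k,0}$ admits, at each level, a factorization $\phi_k=\exp(Y_{1,k})\cdots\exp(Y_{m_k,k})$, but the number of factors $m_k$ may grow with $k$ and, even if it does not, the factorizations at different levels need not be compatible under $\pi_{k,l}$. What your argument actually yields is that $\langle\exp({\mathfrak g})\rangle$ surjects onto each $G_{k,0}$ (and even this requires the unstated fact that ${\mathfrak g}\to{\mathfrak g}_k$ is surjective, which follows from surjectivity of $G_{k,0}\to G_{l,0}$ in characteristic zero plus Mittag--Leffler), i.e.\ that $\langle\exp({\mathfrak g})\rangle$ is dense in $\overline{G}_0$ in the Krull topology --- not that it equals $\overline{G}_0$. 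To close the gap one needs a uniform bound on the number of exponential factors (for instance, every element of a connected complex algebraic group lies in a Borel subgroup, on which the exponential of a connected solvable complex algebraic group is surjective, giving a product of boundedly many exponentials at every level) together with a compactness or constructibility argument to select level-compatible factorizations. A smaller omission: the proposition also asserts that $\overline{G}_0$ is pro-algebraic, which you do not address, although it follows easily since $\pi_k(\overline{G}_0)=G_{k,0}$ is already Zariski closed.
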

\begin{rem}[{cf. \cite{Ecalle, MaRa:aen}}]
The exponential provides a bijection between $\hat{\mathfrak X}_{N} \cn{n}$ and
$\diffh{u}{n}$.
\end{rem}
\begin{defi}
 Given $\phi \in \diffh{u}{n}$ we denote by $\log \phi$ the unique formal nilpotent vector field
such that $\phi = \mathrm{exp}(\log \phi)$. It is called the {\it infinitesimal generator} of $\phi$.
\end{defi}
\begin{rem}
\label{rem:explog}
We have $\overline{\langle \phi \rangle} = \{ \mathrm{exp} (t \log \phi) : t \in {\mathbb C} \}$ for any
$\phi \in \diffh{u}{2}$  \cite[Remark 2.11]{JR:solvable25}.
In particular if $\phi$ is a unipotent element of a subgroup $G$ of $\diffh{}{n}$, then
the formal vector field $\log \phi$ belongs to the Lie algebra
${\mathfrak g}$ of $G$ as a consequence of $\overline{\langle \phi \rangle} \subset \overline{G}$.
\end{rem}
\begin{rem}
\label{rem:invexplog}
Let $\phi \in \diffh{u}{2}$.
Consider a formal irreducible curve $\gamma$. Then $\gamma$ is $\phi$-invariant
if and only if is $\log \phi$-invariant. The necessary condition is obvious. Let us show the sufficient condition.
The group $H_{\gamma} = \{\eta \in \diffh{}{n} : \eta (\gamma)=\gamma\}$ is pro-algebraic
\cite[Remark 2.8]{JR:solvable25}.
Thus $\overline{\langle \phi \rangle} = \{ \mathrm{exp} (t \log \phi) : t \in {\mathbb C} \}$
is contained in $H_{\gamma}$.
Since the one parameter flow of $\log \phi$ preserves $\gamma$, it follows that $\gamma$ is
$\log \phi$-invariant.
\end{rem}

Instead of the derived series of groups of formal diffeomorphisms and Lie algebras of formal
vector fields, it is more natural to consider its projective versions. More precisely, we define
\[ \overline{G}^{(k)} = \{ \phi \in \diffh{}{n} : \phi_j \in G_{j}^{(k)} \ \forall j \in {\mathbb N} \}, \]
\[  \overline{\mathfrak g}^{(k)} = \{ X \in \hat{\mathfrak X} \cn{n} : X_j \in {\mathfrak g}_{j}^{(k)}
\ \forall j \in {\mathbb N} \} \]
and
\[ \overline{\mathcal C}^{k} G = \{ \phi \in \diffh{}{n} : \phi_j \in {\mathcal C}^{k}  G_{j} \ \forall j \in {\mathbb N} \}, \]
\[ \overline{\mathcal C}^{k}  {\mathfrak g} =
 \{ X \in \hat{\mathfrak X} \cn{n} : X_j \in {\mathcal C}^{k} {\mathfrak g}_{j}  \ \forall j \in {\mathbb N} \} \]
for $k \geq 0$. We have $\overline{G}^{(0)} = \overline{\mathcal C}^{0} G = \overline{G}$ and
$\overline{\mathfrak g}^{(0)} = \overline{\mathcal C}^{0}  {\mathfrak g}  = {\mathfrak g}$.
\begin{rem}[{\cite[Lemmas 5 and 6]{JR:arxivdl}} {\cite[Proposition 2.8]{JR:solvable25}}]
The group $\overline{G}^{(k)}$ is pro-algebraic and it is equal to the closure of
$(\overline{G})^{(k)}$ in the Krull topology for any $k \in {\mathbb N}$.
Analogously $\overline{\mathfrak g}^{(k)}$ is the closure in the Krull topology of ${\mathfrak g}^{(k)}$
for any $k \in {\mathbb N}$.
\end{rem}
 The Lie correspondence is well-behaved for the derivation.
 \begin{pro}[{\cite[Proposition 3]{JR:arxivdl}}]
 \label{pro:dera}
 Let $G$ be a subgroup of $\diffh{}{n}$ that is connected, i.e. $\overline{G} = \overline{G}_0$.
 Then $\overline{\mathfrak g}^{(k)}$ is the Lie algebra of $\overline{G}^{(k)}$ and
 $\overline{\mathcal C}^{k} {\mathfrak g}$ is the Lie algebra of $\overline{\mathcal C}^{k} G$
 for any $k \geq 0$.
 \end{pro}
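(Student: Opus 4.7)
The plan is to reduce Proposition \ref{pro:dera} to the classical fact that, for a connected linear algebraic group $H$ with Lie algebra $\mathfrak{h}$, the derived subgroup $[H,H]$ is a closed connected algebraic subgroup whose Lie algebra is $[\mathfrak{h},\mathfrak{h}]$ (and similarly for the lower central series). Once this is granted at each finite level $D_k$, the result follows by a projective-limit argument.

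First I would unpack the connectedness hypothesis. Since $\overline{G}=\overline{G}_0=\varprojlim G_{k,0}$, one gets $G_k=G_{k,0}$ for every $k\ge 1$, i.e.\ each $G_k$ is a connected linear algebraic subgroup of $D_k$. Then, applying the classical result (Borel's book on linear algebraic groups) at level $k$ and iterating in $j$, one concludes that for every $k,j\ge 0$ the subgroup $G_k^{(j)}$ is a closed connected algebraic subgroup of $G_k$ with Lie algebra $\mathfrak{g}_k^{(j)}$; the same statement holds replacing the derived series by the lower central series, giving $\mathrm{Lie}(\mathcal{C}^{j}G_k)=\mathcal{C}^{j}\mathfrak{g}_k$. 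A useful auxiliary fact here is that any surjective morphism $\pi_{k,l}:G_k\to G_l$ of connected algebraic groups sends $G_k^{(j)}$ onto $G_l^{(j)}$ and $\mathcal{C}^{j}G_k$ onto $\mathcal{C}^{j}G_l$, simply because it sends commutators to commutators; this guarantees that the projective systems $(G_k^{(j)})_k$ and $(\mathcal{C}^{j}G_k)_k$ are well-defined with surjective transition maps.

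Second, I would assemble these level-wise statements into the projective limit. By construction $\overline{G}^{(j)}=\varprojlim_k G_k^{(j)}$ and $\overline{\mathcal{C}}^{\,j}G=\varprojlim_k \mathcal{C}^{j} G_k$, while $\overline{\mathfrak{g}}^{(j)}=\varprojlim_k \mathfrak{g}_k^{(j)}$ and $\overline{\mathcal{C}}^{\,j}\mathfrak{g}=\varprojlim_k \mathcal{C}^{j}\mathfrak{g}_k$. To apply the definition of the Lie algebra given in the paper to the pro-algebraic group $\overline{G}^{(j)}$, one needs to identify the $k$-th Zariski projection of $\overline{G}^{(j)}$ with $G_k^{(j)}$. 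This identification follows from the surjectivity of the transition maps noted above together with the fact that, for a connected algebraic group, the derived (resp.\ central-series) subgroup is already Zariski closed, so no further closure is needed. Once this identification is in place, the Lie algebra of $\overline{G}^{(j)}$ is $\varprojlim_k \mathrm{Lie}(G_k^{(j)})=\varprojlim_k \mathfrak{g}_k^{(j)}=\overline{\mathfrak{g}}^{(j)}$, and the analogous computation handles the lower central series.

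The main conceptual obstacle is the invocation of the classical theorem for connected linear algebraic groups at each fixed level; everything afterwards is formal bookkeeping. A secondary technical point, which I would not want to gloss over, is the compatibility of the projective system with the derived (resp.\ central) operation: one must verify that the surjective maps $\pi_{k,l}$ respect these operations and that $\pi_j(\overline{G}^{(k)})=G_j^{(k)}$ (analogously for central series), so that the limit identification really produces the Lie algebra of $\overline{G}^{(k)}$ as defined in the paper rather than some a priori smaller object.
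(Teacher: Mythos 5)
The paper does not actually prove this proposition: it is imported verbatim from \cite[Proposition 3]{JR:arxivdl}, so there is no in-paper argument to compare yours against. Judged on its own, your outline is the natural and essentially correct one, and it matches the framework the paper sets up around the statement (Remark \ref{rem:niso} already records the level-wise identity $\overline{\pi_{k}(G^{(j)})}=(\overline{\pi_{k}(G)})^{(j)}=G_{k}^{(j)}$ via Borel, which is exactly your compatibility step). The two inputs you isolate are the right ones: (i) the characteristic-zero fact that for a \emph{connected} linear algebraic group $H$ the terms of the derived and lower central series are closed connected subgroups with $\mathrm{Lie}(H^{(j)})=\mathfrak{h}^{(j)}$ and $\mathrm{Lie}(\mathcal{C}^{j}H)=\mathcal{C}^{j}\mathfrak{h}$ --- and connectedness is genuinely needed here, since for disconnected $H$ (e.g.\ $\mathbb{G}_a\rtimes\mathbb{Z}/2$ with the $-1$ action) one has $[\mathfrak{h},\mathfrak{h}]=0$ but $\mathrm{Lie}([H,H])=\mathfrak{h}$; and (ii) the identification $\pi_{k}(\overline{G}^{(j)})=G_{k}^{(j)}$, so that the Lie algebra of $\overline{G}^{(j)}$ in the sense of the paper is computed from the $G_{k}^{(j)}$ themselves rather than from some larger closure.

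One point you should make explicit rather than leave as ``formal bookkeeping'': the surjectivity of the projections $\overline{G}^{(j)}\to G_{k}^{(j)}$ from the inverse limit. You correctly reduce it to the surjectivity of the transition maps $\pi_{k,l}:G_{k}^{(j)}\to G_{l}^{(j)}$, but passing from surjective transition maps to surjective limit projections is a Mittag--Leffler-type statement; it holds here because the index set is $\mathbb{N}$ and the fibers are nonempty cosets (alternatively, because each $G_k^{(j)}$ is the image of the corresponding derived group of $\overline{G}$, whose $k$-th projections are dense in $G_k^{(j)}$ and whose closures are handled by Remark \ref{rem:niso}). With that spelled out, the argument is complete and is, as far as one can tell, the same route as the cited reference.
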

 \begin{rem}
 \label{rem:niso}
 Since
\[ \overline{\pi_{k}(G^{(j)})} =  (\overline{\pi_{k}(G)})^{(j)}  = G_{k}^{(j)} \ \mathrm{and} \
\overline{\pi_{k}({\mathcal C}^{j} G )} = {\mathcal C}^{j} \overline{\pi_{k}( G )} = {\mathcal C}^{j}  G_{k}  \]
for $j \geq 0$ and $k \geq 1$ (cf. \cite[Corollary 1, p. 60]{Borel}),  the Zariski closure
$\overline{G^{(j)}}$ of $G^{(j)}$ is equal to $\overline{G}^{(j)}$
and $\overline{{\mathcal C}^{j} G} =\overline{\mathcal C}^{j} G$ for $j \geq 0$.
In particular $G$ is abelian (resp.  nilpotent, solvable) if and only if
$\overline{G}$ is abelian (resp. nilpotent, solvable).
Proposition \ref{pro:dera} implies that if $G$ is abelian (resp. nilpotent, solvable) then ${\mathfrak g}$ is
abelian (resp. nilpotent, solvable). The reciprocal also holds if $\overline{G} = \overline{G}_0$.
\end{rem}
\subsection{Dimension of groups of formal diffeomorphisms}
 The sequence $(\dim G_k)_{k \geq 1}$ is increasing \cite[Lemma 3.1]{JR:finite}. Thus we can define
 \begin{defi}
 Let $G$ be a subgroup of $\diffh{}{n}$. We define
 $\dim G = \lim_{k \to \infty} \dim G_k \in {\mathbb Z}_{\geq 0} \cup \{\infty\}$.
 We say that $G$ is {\it finite dimensional} if $\dim G < \infty$.
 \end{defi}
\begin{rem}[{\cite{JR:finite}}]
\label{rem:finite}
A subgroup $G$ of $\diffh{}{n}$ is finite dimensional if and only if its Lie algebra ${\mathfrak g}$
is finite dimensional.
\end{rem}
One of the major points in the proof of the Main Theorem is that the next result
allows us to reduce our study to infinite dimensional groups.
 \begin{teo}
\label{teo:UIfdim}
\cite[Theorem 1.5]{JR:finite}
Let $G$ be a finite dimensional subgroup of $\diffh{}{2}$. Then $G$ satisfies (UI).
\end{teo}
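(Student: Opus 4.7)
The plan is to reduce the problem to a finite-dimensional algebraic setting and then exploit the upper semi-continuity of intersection multiplicities together with noetherianity of an affine space.

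First, I invoke Proposition \ref{pro:ui_discrete} to reduce the (UI) property to checking $(\mathrm{UI})_\gamma$ for every formal irreducible curve $\gamma$; fix such a $\gamma$. Since $\overline{G}_0$ is a finite index normal subgroup of $\overline{G}$ by Proposition \ref{pro:lie}, the intersection $G \cap \overline{G}_0$ has finite index in $G$. By Proposition \ref{pro:redfig}, $(\mathrm{UI})_\gamma$ for $G$ is equivalent to $(\mathrm{UI})_\gamma$ for $G \cap \overline{G}_0$, and since the latter sits inside $\overline{G}_0$, it suffices to establish $(\mathrm{UI})_\gamma$ for $\overline{G}_0$. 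By Proposition \ref{pro:lie} and Remark \ref{rem:finite}, $\overline{G}_0$ is generated as a group by $\exp(\mathfrak{g})$ with $\dim_{\mathbb C} \mathfrak{g} = d < \infty$.

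Next, applying standard algebraic group theory at each jet level $k$, every element of the connected algebraic group $G_{k,0}$ is a product of at most $r := 2 \dim G_{k,0} \leq 2d$ exponentials of elements of $\mathfrak{g}_k$. A compactness argument at the pro-algebraic limit then yields that every $\phi \in \overline{G}_0$ can be written as $\exp(X_1) \cdots \exp(X_r)$ with $X_i \in \mathfrak{g}$, giving a surjection
\[
\mu : \mathfrak{g}^{r} \longrightarrow \overline{G}_0, \qquad (X_1, \ldots, X_r) \longmapsto \exp(X_1) \cdots \exp(X_r) .
\]
For each $n \geq 1$, consider the level set
\[
A_n := \{ v \in \mathfrak{g}^{r} : (\mu(v)(\gamma), \gamma) \geq n \} .
\]
By Equation (\ref{equ:inp}), the condition $(\mu(v)(\gamma), \gamma) \geq n$ translates into $\mu(v)(\gamma)$ sharing a sufficient number of infinitely near points with $\gamma$, which is a polynomial condition on $j^{m(n)} \mu(v)$ for some $m(n)$ depending only on $n$ and the multiplicity sequence of $\gamma$. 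Since $j^{m(n)} \mu(v)$ depends polynomially on $v$ through the exponential series and the group multiplication, each $A_n$ is Zariski-closed in the finite-dimensional affine space $\mathfrak{g}^{r}$. The descending chain $A_1 \supseteq A_2 \supseteq \cdots$ stabilizes at some $A_{n^{*}}$ by noetherianity. For $v \notin A_{n^{*}}$ we have $(\mu(v)(\gamma), \gamma) < n^{*}$, while for $v \in A_{n^{*}} = \bigcap_n A_n$ the multiplicity equals $\infty$ and $\mu(v)(\gamma) = \gamma$. Hence $\phi \mapsto (\phi(\gamma), \gamma)$ takes at most $n^{*}+1$ distinct values on $\overline{G}_0$, proving $(\mathrm{UI})_\gamma$.

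The main obstacle is the uniform decomposition into a fixed number $r$ of exponential factors. At each finite level this follows from the well-known fact that a connected algebraic group is covered by $O(\dim)$-fold products of any Zariski-dense symmetric generating subset containing the identity, so the bound $r \leq 2d$ is uniform in $k$; lifting this coherently to $\overline{G}_0 = \varprojlim G_{k,0}$ demands a compactness/inverse-limit argument on the constructible varieties that parametrize decompositions. An alternative route, which avoids $\mu$ altogether, is to work directly with the descending chain of pro-algebraic closed subsets $\{ \phi \in \overline{G}_0 : (\phi(\gamma), \gamma) \geq n \}$ and prove that their projections to $G_{k,0}$ stabilize by noetherianity in a variety of uniformly bounded dimension $d$; either route carries the same finite-dimensional heart.
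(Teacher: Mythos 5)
This statement is not proved in the paper at all: it is imported verbatim as Theorem 1.5 of \cite{JR:finite}, so there is no internal proof to compare against. Judging your argument on its own merits, it reproduces the general strategy that the introduction attributes to the earlier works (transfer to a finite-dimensional parameter space, then apply a finiteness principle), but it has a genuine gap at its central step. You claim that $j^{m(n)}\mu(v)$ depends \emph{polynomially} on $v\in{\mathfrak g}^{r}$ ``through the exponential series''. This is false unless ${\mathfrak g}$ consists of nilpotent vector fields. At each jet level $k$, an element $X_k\in L_k$ is a derivation of ${\mathfrak m}/{\mathfrak m}^{k+1}$ that is in general \emph{not} nilpotent as a linear map (e.g.\ for $X=x\,\partial/\partial x$ one gets $\exp(tX_1)(x)=e^{t}x$), so $\exp:{\mathfrak g}_k\to G_{k,0}$ is a transcendental, not an algebraic, map. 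Consequently your sets $A_n$ are analytic rather than Zariski-closed subsets of ${\mathfrak g}^{r}$, and the noetherian stabilization argument collapses: descending chains of closed analytic subsets of an affine space need not stabilize globally (consider $A_n=\{n,n+1,\dots\}\subset{\mathbb C}$). This is exactly the point where the cited proof must bring in genuinely harder finiteness results for exponential polynomials (Skolem-type theorems), as the introduction hints.

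Two further steps are asserted without justification. First, the uniform bounded generation $\overline{G}_0=(\exp{\mathfrak g})^{r}$: the standard algebraic-group fact you invoke applies to constructible dense generating subsets, but $\exp({\mathfrak g}_k)$ need not be constructible for the same reason as above, and the ``compactness/inverse-limit argument'' lifting a level-by-level decomposition coherently to $\varprojlim G_{k,0}$ is nontrivial and left unproved. Second, your fallback route via the chain $B_n=\{\phi\in\overline{G}_0:(\phi(\gamma),\gamma)\geq n\}$ does produce Zariski-closed conditions at each jet level, but bounded dimension of the ambient varieties $G_{k,0}$ does not by itself force a descending chain of closed subsets to stabilize (think of a strictly decreasing chain of finite sets); one also needs control on the number of components, which is where the real work lies. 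The skeleton of reductions (Proposition \ref{pro:ui_discrete}, passage to $\overline{G}_0$ via Propositions \ref{pro:lie} and \ref{pro:redfig}) is fine, but the finite-dimensional heart of the argument is not established.
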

\section{Proof of the Main Theorem}
In this section we show that (FD) implies (UI) in dimension $2$.
We start showing the result for nilpotent groups (Proposition \ref{pro:UIn}) and then
we proceed with the general case.
\begin{pro}
\label{pro:UIn}
Let $G$ be a (FD) nilpotent subgroup of $\diffh{}{2}$.
Then $G$ satisfies (UI).
\end{pro}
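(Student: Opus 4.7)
The plan is to reduce to $(\mathrm{UI})_\gamma$ for each formal irreducible curve $\gamma$ via Proposition~\ref{pro:ui_discrete}. Let $\mathfrak{g}$ denote the Lie algebra of $\overline{G}$. Since $G$ is nilpotent, so are $\overline{G}$, $\overline{G}_0$ (Remark~\ref{rem:niso}) and $\mathfrak{g}$ (Proposition~\ref{pro:dera}). Proposition~\ref{pro:nla} then splits the problem in two cases: either $\dim_{\mathbb{C}} \mathfrak{g} < \infty$, in which case $G$ is finite dimensional by Remark~\ref{rem:finite} and (UI) follows from Theorem~\ref{teo:UIfdim}; or $\mathfrak{g}$ is abelian with
\[ \mathfrak{g} \subset \{fX : f \in \hat{\mathcal{O}}_2,\; X(f)=0\} \]
for some $X \in \mathfrak{g}$ admitting a non-constant formal first integral $f_0$.

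In the infinite dimensional case I would replace $G$ by the finite index subgroup $H := G \cap \overline{G}_0$, which is allowed by Proposition~\ref{pro:redfig}. Abelianness of $\mathfrak{g}$ makes $\exp$ a homomorphism with $\exp(\mathfrak{g}) = \overline{G}_0$, so every $\phi \in H$ has the form $\exp(fX)$ with $X(f) = 0$. If $\gamma$ is $X$-invariant, then $\gamma$ is $fX$-invariant for every admissible $f$, and Remark~\ref{rem:invexplog} gives $\phi(\gamma) = \gamma$ for every $\phi \in H$, so $(\mathrm{UI})_\gamma$ is automatic (the orbit is a single point).

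Suppose now $\gamma$ is not $X$-invariant, and fix an irreducible $h \in \hat{\mathcal{O}}_2$ defining $\gamma$. Since $X(f) = 0$ one has $(fX)^k(h) = f^k X^k(h)$, yielding
\[ h \circ \exp(fX) - h \;=\; f \cdot \Bigl( X(h) + \tfrac{f}{2}X^2(h) + \cdots \Bigr). \]
The second factor does not vanish on $\gamma$ because $X(h) \notin (h)$. Evaluating along a Puiseux parametrisation $t \mapsto (x(t),y(t))$ of $\gamma$, one obtains
\[ (\phi(\gamma), \gamma) \;\leq\; C + \mathrm{ord}_t\, f(x(t),y(t)) \]
for a constant $C$ independent of $\phi$, and it then remains to bound the order of $f$ at the origin uniformly in $\phi$. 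This is where finite determination enters: if $f$ had very large vanishing order at $0$, then $fX$ would too, forcing $\exp(fX)$ to have trivial $k$-jet for $k$ arbitrarily large; by (FD) this would force $\phi = \mathrm{Id}$, so the non-identity elements of $H$ all satisfy a uniform upper bound on $\mathrm{ord}_0 f$, and $(\mathrm{UI})_\gamma$ follows.

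The main obstacle is the last implication, since the vector field $X$ produced by Proposition~\ref{pro:nla} need not be nilpotent and $\exp$ may then have non-trivial kernel on $\mathfrak{g}$, so ``$\exp(fX)=\mathrm{Id}$'' need not mean ``$fX=0$''. One likely has to either replace $X$ by a nilpotent element of $\mathfrak{g}$ exploiting the first integral $f_0$, or descend further to the subgroup of unipotent elements of $H$, where Proposition~\ref{pro:lie} makes $\exp$ a bijection onto $\hat{\mathfrak{X}}_N \cn{2}$ and the passage from jet vanishing to vanishing of $f$ is unambiguous.
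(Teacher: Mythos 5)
Your reduction chain coincides with the paper's up to the last step: restrict to $(\mathrm{UI})_\gamma$, dispose of the finite-dimensional case via Theorem \ref{teo:UIfdim}, invoke Proposition \ref{pro:nla} to get ${\mathfrak g}$ abelian and contained in $\{fX : X(f)=0\}$, pass to $H=G\cap\overline{G}_0\subset\exp({\mathfrak g})$, and treat $X$-invariant $\gamma$ trivially. The divergence, and the gaps, are in the final estimate. First, the claim that the second factor $X(h)+\tfrac{f}{2}X^2(h)+\cdots$ has order along $\gamma$ bounded by a constant $C$ independent of $\phi$ is not justified: only the term $X(h)$ is independent of $f$, and you have not excluded cancellation between $X(h)|_\gamma$ and the tail $f|_\gamma\cdot\bigl(\tfrac12 X^2(h)+\cdots\bigr)|_\gamma$, whose order along $\gamma$ varies with $f$. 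Second, and more seriously, the quantity you need to bound is $\mathrm{ord}_t\, f(x(t),y(t))$, the order of $f$ \emph{along} $\gamma$, whereas the (FD) argument you give (large vanishing order of $f$ at the origin forces $j^k\exp(fX)=\mathrm{Id}$) only bounds the multiplicity $m_0(f)$ at the origin. These are different invariants: a first integral with $m_0(f)=1$ can have arbitrarily high contact with $\gamma$. This gap is repairable --- by Remark \ref{rem:first} the ring of first integrals in $\hat{\mathcal O}_2$ is ${\mathbb C}[[f_0]]$, so $f=P(f_0)$ and $\mathrm{ord}_t f|_\gamma=j\cdot\mathrm{ord}_t f_0|_\gamma$ where $j=m_0(f)/m_0(f_0)$ is controlled by your jet bound and $\mathrm{ord}_t f_0|_\gamma<\infty$ because $\gamma$ is not $X$-invariant --- but you do not make the distinction. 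By contrast, the obstacle you do flag (non-injectivity of $\exp$ on ${\mathfrak g}$ when $X$ is not nilpotent) is harmless: you only use the implication ``$m_0(f)$ large $\Rightarrow$ trivial jet $\Rightarrow$ identity'', which is valid for any representative $f$ of a given $\phi$.

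The paper avoids both issues by a geometric normalization that you skip: it blows up along the infinitely near points of $\gamma$ until the lift of $X$ becomes non-singular, and passes to the subgroup $G_{\gamma,a}$ --- whose Lie algebra consists of the $gX$ with $g\in{\mathfrak m}$, precisely so that the lifts exist at every intermediate point --- to reach coordinates in which $X=\partial/\partial y$ and $\gamma=\{y=0\}$. There every element is $(x,y+f(x))$, the intersection number is literally $m_0(f)$, and finite determination of the lifted group (Proposition \ref{pro:blowfd}) bounds it at once; Propositions \ref{pro:blowUI} and \ref{pro:redfig} and Remark \ref{rem:redk} transport the conclusion back to $G$. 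If you wish to keep your coordinate-free computation you must supply the two missing estimates above; otherwise the blow-up reduction is the intended route.
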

\begin{proof}
Consider a formal irreducible curve $\gamma$.
Let us show that $G$ satisfies $(\mathrm{UI})_{\gamma}$.
By Propositions \ref{pro:blowfd} and  \ref{pro:blowUI} we can suppose that $\gamma$ is smooth up to replace
$\gamma$ and $G$ with $\gamma_{j}$ and $\tau_{j} ({G}_{\gamma, j})$ respectively for some $j \geq 1$.

We can suppose that $G$ is infinite dimensional by Theorem \ref{teo:UIfdim}.
Hence the Lie algebra ${\mathfrak g}$ of $G$ is infinite dimensional and nilpotent
by Remarks \ref{rem:finite} and \ref{rem:niso}.
We deduce that ${\mathfrak g}$ is abelian and such that
\[ {\mathfrak g} \subset \{ g X : g \in \hat{\mathcal O}_{2} \ \mathrm{and} \ X(g)=0 \} \]
for some $X \in {\mathfrak g} \setminus \{0\}$ that has a first integral in
$\hat{\mathcal O}_{2} \setminus {\mathbb C}$ by Proposition \ref{pro:nla}.
The group $\overline{G}_{0}$ is generated by $\mathrm{exp} ({\mathfrak g})$ (Proposition \ref{pro:lie})
and since ${\mathfrak g}$
is abelian, $\overline{G}_{0}$ is abelian and $\overline{G}_{0} = \mathrm{exp}({\mathfrak g})$.

Suppose that $\gamma$ is $X$-invariant. Then it is ${\mathfrak g}$-invariant and we
obtain $\phi  (\gamma) = \gamma$ for any $\phi \in \overline{G}_{0}$.
Since $\overline{G}_{0}$ is a finite index subgroup of $\overline{G}$ by Proposition \ref{pro:lie},
the $\overline{G}$-orbit of $\gamma$ is finite and both $\overline{G}$ and $G$ satisfy $(\mathrm{UI})_{\gamma}$.
Thus we suppose that $\gamma$ is not $X$-invariant from now on.

Let $(p_{k})_{k \geq 1}$ be the sequence of infinitely near points of $\gamma$ (see section \ref{sec:blow}).
Consider the lift $\overline{X}_{k}$ of $X$ at $p_{k}$ for $k \geq 0$ (see Remark \ref{rem:lift}).
Since $\gamma$ is not $X$-invariant,
there exists $a \geq 1$ such that $p_{a}$ is the first infinitely near point of $\gamma$ satisfying that
$\overline{X}_{a}$ is non-singular at $p_{a}$.
Moreover $\overline{X}_{a}$ preserves the irreducible components of the divisor
$(\pi_1 \circ \hdots \circ \pi_a)^{-1} (0,0)$ of the
blow-up process passing through
$p_{a}$ (cf. section \ref{sec:blow}).
Since $\overline{X}_{a}$ is non-singular at $p_{a}$, there is a unique such component $D$.
Moreover since $\gamma$ is smooth, $\gamma_{a}$ is transversal to $D$ at $p_{a}$.
Hence there exists a formal coordinate system $(x,y)$ centered at $p_{a}$ such that
$\overline{X}_{a} = \frac{\partial}{\partial y}$, $D= \{x=0\}$ and $\gamma_{a} = \{ y=0 \}$.

We denote $J=G_{\gamma, a}$ and $H=J \cap \overline{J}_0$.
Let ${\mathfrak h}$ be the Lie algebra of $J$.
The inclusion $J \subset G$ implies ${\mathfrak h} \subset {\mathfrak g}$ and in particular
${\mathfrak h}$ is abelian.
As a consequence of the characterization of
${\mathfrak h}$ given by Equation
(\ref{equ:defliealg}), $p_j$ is a singular point of $\overline{(g X)}_j$
for all $g X \in {\mathfrak h}$ and $0 \leq j \leq a$.
Since $p_j$ is a singular point of $\overline{X}_j$, it is also singular for the lift of
any element of ${\mathfrak g}$ at $p_j$ for every $0 \leq j < a$.
The lift $\overline{(g X)}_a$ of $g X \in {\mathfrak g}$ is singular at $a$ if and only if
$g$ belongs to the maximal ideal ${\mathfrak m}$ of  $\hat{\mathcal O}_{2}$.
The previous discussion implies
\[ {\mathfrak h} \subset  \{ g X  \in {\mathfrak g} : g \in \hat{\mathcal O}_2 \cap {\mathfrak m}  \} .
 \]
Since $\overline{J}_0$ is a finite index normal subgroup of $\overline{J}$,
$H$ is a finite index normal subgroup of $J$.
The Zariski closure $\overline{H}$ of $H$ is a finite
index normal subgroup of $\overline{J}$ \cite[Lemma 2.4]{JR:finite}.
A finite index subgroup of $\overline{J}$ always contains $\overline{J}_0$ \cite[Lemma 2.1]{JR:finite}.
Since $\overline{J}_0$ is pro-algebraic by Proposition \ref{pro:lie}, we obtain
$\overline{J}_{0} \subset  \overline{H} \subset \overline{J}_0$ and then
$ \overline{H} = \overline{J}_0$. In particular we deduce $\overline{H} = \overline{H}_{0}$.
The property $\overline{H}=  \overline{J}_0$ implies that  the group
$\overline{H}$ is generated by $\mathrm{exp} ({\mathfrak h})$ (Proposition \ref{pro:lie}).
Since ${\mathfrak h}$ is abelian, the groups $H$ and $\overline{H}$ are abelian,
$\overline{H} = \mathrm{exp}({\mathfrak h})$ and in particular
$H \subset \mathrm{exp}({\mathfrak h})$.
Thus any $\phi \in \tau_{a} (H)$ is of the form
\[ \phi(x,y) =\mathrm{exp} \left( f(x) \frac{\partial}{\partial y} \right) =(x,y+f(x)) \]
where $f(x) \in {\mathbb C}[[x]] \cap (x)$ and
we obtain $(\phi (\gamma_{a}), \gamma_{a})= m_{0}(f)$.
Since $\tau_{a} (H)$ satisfies (FD) by Proposition \ref{pro:blowfd}, we deduce that
$\tau_{a} (H)$ satisfies $(UI)_{\gamma_{a}}$.
Therefore $H$ satisfies $(UI)_{\gamma}$ by Proposition \ref{pro:blowUI}.
Since $H$ is a finite index subgroup of $J$, the group $J$ satisfies  $(UI)_{\gamma}$ by Proposition \ref{pro:redfig}.
Finally $G$ satisfies  $(UI)_{\gamma}$ by Remark \ref{rem:redk}.
\end{proof}
\begin{lem}
\label{lem:fdinil}
Let $G$ be a (FD) subgroup of $\diffh{1}{n}$. Then $G$ is nilpotent.
\end{lem}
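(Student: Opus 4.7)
The plan is to exploit the following structural fact: truncating to $k$-jets embeds $\diffh{1}{n}$ into a unipotent (hence nilpotent) linear algebraic group, and a subgroup of a nilpotent group of class $c$ is itself nilpotent of class at most $c$.

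Suppose $G$ is $k$-finitely determined, and consider the projection $\pi_k : \diffh{}{n} \to D_k$. Since $\ker(\pi_k) \cap G = \{\phi \in G : j^{k} \phi = \mathrm{Id}\} = \{\mathrm{Id}\}$, the restriction $\pi_k|_G : G \to \pi_k(G)$ is injective and therefore a group isomorphism onto its image. Moreover $\pi_k(G)$ is contained in $D_{k,1} := \pi_k (\diffh{1}{n})$, so it suffices to show that $D_{k,1}$ is nilpotent.

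The key observation is that every $\phi \in \diffh{1}{n}$ satisfies $\phi - \mathrm{Id} \in {\mathfrak m}^{2}$ componentwise, so a first-order Taylor expansion shows that $f \circ \phi - f \in {\mathfrak m}^{j+1}$ whenever $f \in {\mathfrak m}^{j}$. Consequently, every element of $D_{k,1}$ preserves the decreasing filtration
\[ {\mathfrak m}/{\mathfrak m}^{k+1} \supset {\mathfrak m}^{2}/{\mathfrak m}^{k+1} \supset \cdots \supset {\mathfrak m}^{k}/{\mathfrak m}^{k+1} \supset 0 \]
and acts as the identity on every successive quotient ${\mathfrak m}^{j}/{\mathfrak m}^{j+1}$. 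Choosing a basis of ${\mathfrak m}/{\mathfrak m}^{k+1}$ adapted to this filtration, $D_{k,1}$ becomes a subgroup of the group of upper triangular matrices with $1$s on the diagonal, which is a unipotent algebraic group, nilpotent of class at most $k-1$. Since nilpotency of class at most $c$ is inherited by all subgroups, $\pi_k(G)$ is nilpotent, and transporting back through the isomorphism $G \cong \pi_k(G)$ yields the nilpotency of $G$.

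There is no serious obstacle here; the statement is essentially a structural remark once one identifies the correct ambient algebraic group. The only point that requires a small verification is the filtration-preserving property of tangent-to-identity jets, which follows from a one-line Taylor expansion. The bound on the nilpotency class of $G$ arising from this argument depends on $k$, which is consistent with the fact that \emph{no} uniform bound over all (FD) subgroups can exist.
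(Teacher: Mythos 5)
Your proof is correct, but it takes a genuinely different route from the paper's. The paper argues directly on the lower central series of $G$: a two-line jet computation shows that if $j^{k}\phi = \mathrm{Id}$ and $\eta$ is tangent to the identity, then $j^{k+1}[\phi,\eta]=\mathrm{Id}$, whence ${\mathcal C}^{k}G \subset \{\phi \in G : j^{k+1}\phi = \mathrm{Id}\}$ by induction, and finite determination makes this group trivial for $k$ large. You instead use finite determination first, to embed $G$ via $\pi_k$ into $\pi_k(\diffh{1}{n})$, and then observe that the latter is a unipotent subgroup of $\mathrm{GL}({\mathfrak m}/{\mathfrak m}^{k+1})$, because tangent-to-identity jets preserve the ${\mathfrak m}$-adic filtration and act trivially on its graded pieces; nilpotency then comes from the standard structure of unipotent triangular groups together with the fact that subgroups inherit the nilpotency class. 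The underlying mechanism is the same central filtration of $\diffh{1}{n}$ by jet order, and both arguments yield nilpotency class at most $k-1$ for a $k$-finitely determined group; the paper's version is shorter and self-contained, while yours makes the ambient finite-dimensional nilpotent group explicit and meshes with the $D_k$ formalism the paper uses elsewhere. Your key filtration claim, that $f\circ\phi - f \in {\mathfrak m}^{j+1}$ for $f \in {\mathfrak m}^{j}$ and $\phi$ tangent to the identity, is correct and is exactly the one-line Taylor verification you indicate. One cosmetic point: with the paper's conventions $\pi_k$ acts by pullback, so it is an anti-homomorphism ($\pi_k(\phi\circ\eta)=\pi_k(\eta)\circ\pi_k(\phi)$) and $G$ is only anti-isomorphic to its image; this is harmless, since a group is nilpotent if and only if its opposite group is, but it deserves a word.
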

\begin{proof}
Let $\phi, \eta \in G$ such that $j^{k} \phi = \mathrm{Id}$. Then we have $j^{k+1} [\phi, \eta] = \mathrm{Id}$.
As a consequence ${\mathcal C}^{k} G$ is contained in $\{ \phi \in G : j^{k+1} \phi = \mathrm{Id} \}$.
Since $G$ is (FD), the latter group is trivial for $k>>1$ and hence $G$ is nilpotent.
\end{proof}
The following result is an immediate corollary of Proposition  \ref{pro:UIn} and Lemma \ref{lem:fdinil}.
\begin{cor}
\label{cor:uiUI}
Let $G$ be a (FD) subgroup of $\diffh{1}{2}$. Then $G$ satisfies (UI).
\end{cor}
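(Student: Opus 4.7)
The plan is to combine the two results cited immediately before the corollary in the most direct way possible. Since $G$ is a subgroup of $\diffh{1}{2}$, all of its elements have identity linear part, hence are in particular unipotent. This places us squarely in the hypothesis of Lemma \ref{lem:fdinil}: because $G$ is finitely determined, the lemma tells us that $G$ is nilpotent.

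Once nilpotency is in hand, the second ingredient, Proposition \ref{pro:UIn}, applies verbatim: a finitely determined nilpotent subgroup of $\diffh{}{2}$ satisfies (UI). Since $\diffh{1}{2}$ is naturally a subgroup of $\diffh{}{2}$, the group $G$ qualifies as a finitely determined nilpotent subgroup of $\diffh{}{2}$, and the conclusion follows.

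There is no real obstacle here; the statement is essentially the composition of the implications
\[
 G \subset \diffh{1}{2},\ G \text{ is (FD)} \ \Longrightarrow\ G \text{ is nilpotent} \ \Longrightarrow\ G \text{ satisfies (UI)},
\]
where the first arrow is Lemma \ref{lem:fdinil} and the second is Proposition \ref{pro:UIn}. The only thing one should check, for rhetorical cleanliness, is that the inclusion $\diffh{1}{2} \subset \diffh{}{2}$ is compatible with the notions of finite determination used in the two statements, which is immediate from the definitions given in the Notations section (the property $j^{k}\phi = \mathrm{Id}$ does not depend on whether $G$ is viewed inside $\diffh{1}{2}$ or inside $\diffh{}{2}$). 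Hence the proof is just a one-line citation of the two previous results.
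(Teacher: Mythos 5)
Your proposal is correct and matches the paper exactly: the author states the corollary as an immediate consequence of Lemma \ref{lem:fdinil} (a finitely determined subgroup of $\diffh{1}{2}$ is nilpotent) followed by Proposition \ref{pro:UIn} (a finitely determined nilpotent subgroup of $\diffh{}{2}$ satisfies (UI)). Nothing further is needed.
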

\begin{proof}[proof of the Main Theorem]
The property (UI) implies the finite determination property by Lemma \ref{lem:easy}.

Let us show the necessary condition. It suffices to show the property $(\mathrm{UI})_{\gamma}$
for a tame pair $(G, \gamma)$ by Proposition \ref{pro:tame}.
Since $G \cap \diffh{1}{2}$ is finitely determined, it satisfies
$(\mathrm{UI})_{\gamma}$  by Corollary \ref{cor:uiUI}.
In particular there exists $M \in {\mathbb N}$ such that $(\phi (\gamma), \gamma) < M$ for
any $\phi \in (G \cap \diffh{1}{2}) \setminus G_{\gamma}$.
We deduce that $G_{\gamma, M} \cap \diffh{1}{2}$ is contained in $G_{\gamma}$.
As a consequence and up to replace
$G$ with $G_{\gamma, M}$ we can suppose
that $(G,\gamma)$ is tame and $G \cap \diffh{1}{2} \subset G_{\gamma}$ by Remark \ref{rem:redk}.
Moreover up to replace $G$ with its finite index subgroup $G \cap \overline{G}_0$ we can suppose
$\overline{G} = \overline{G}_{0}$ by Proposition \ref{pro:redfig}.

Since $(G,\gamma)$ is tame, the group $j^{1} G$ is diagonalizable and in particular
the derived group $G'$ is contained in $\diffh{1}{2}$.
The group $G'$ is nilpotent by Lemma \ref{lem:fdinil}.
We can suppose that $G$ is non-abelian and infinite dimensional by
Proposition \ref{pro:UIn} and Theorem \ref{teo:UIfdim} respectively.
Since $\overline{G} = \overline{G}_0$, Remark \ref{rem:niso} implies that ${\mathfrak g}$ is non-abelian.
Such a property together with $\dim_{\mathbb C} {\mathfrak g} = \infty$ (Remark \ref{rem:finite}) imply
$\dim_{\mathbb C} {\mathfrak g}^{(1)}= \infty$ by Proposition \ref{pro:sla}.
The Lie algebra ${\mathfrak h}$ of $G'$ is equal to $\overline{\mathfrak g}^{(1)}$
by Remark \ref{rem:niso} and Proposition \ref{pro:dera}.
The Lie algebra ${\mathfrak h}$ is nilpotent (Remark \ref{rem:niso}) and satisfies
$\dim_{\mathbb C} {\mathfrak h} \geq \dim_{\mathbb C} {\mathfrak g}^{(1)}= \infty$.
Hence we obtain
\[ {\mathfrak h} \subset  \{ f X : f \in \hat{\mathcal O}_{2} \ \mathrm{and} \ X(f) = 0 \}  \]
for some $X \in {\mathfrak h} \setminus \{0\}$ such that $X$ has a first integral $g$ in
$\mathfrak{m} \setminus \{0\}$ by Proposition \ref{pro:nla}.
The vector field $X$ is of the form $h X'$ where $h \in \hat{\mathcal O}_2$ and the coefficients of
$X'$ have no common factors in $\hat{\mathcal O}_2$.
Given a non-trivial element $f X$ of ${\mathfrak h}$, there are finitely many $f X$-invariant curves.
Indeed the formal irreducible $fX$-invariant curves
are the curves $h_j=0$ ($1 \leq j \leq k$) where $f g h= h_{1}^{n_1} \cdots h_{k}^{n_k}$ is the
irreducible decomposition of $f g h$ in $\hat{\mathcal O}_2$.
Choose $\phi \in G' \setminus \{\mathrm{Id}\}$.
The set of formal irreducible $\phi$-invariant curves coincides with the set of
formal irreducible $\log \phi$-invariant curves by Remark \ref{rem:invexplog}.
Since $\log \phi$ belongs to ${\mathfrak h} \setminus \{0\}$ by Remark \ref{rem:explog},
there are finitely many formal irreducible $\phi$-invariant curves and hence
finitely many formal irreducible $G'$-invariant curves.

Since $G' \subset \diffh{1}{2}$ and $\gamma$ is $G \cap \diffh{1}{2}$-invariant,
the curve $\gamma$ is $G'$-invariant. Since $G'$ is a normal subgroup of $G$, every curve in
the $G$-orbit of $\gamma$ is also $G'$-invariant.
Therefore the $G$-orbit of $\gamma$ is finite and it is obvious that $G$ satisfies
$(\mathrm{UI})_{\gamma}$.
\end{proof}

\bibliography{rendu}
\end{document}